\newtheorem{theorem}{Theorem}
\newtheorem{lemma}[theorem]{Lemma}
\theoremstyle{definition}
\newtheorem{remark}[theorem]{Remark}
\newtheorem{definition}[theorem]{Definition}
\newcommand{\CC}{{\mathbb C}}
\newcommand{\rank}{{\mbox{\rm rank\,}}}
\newcommand{\colb}{\color{blue}}
\newcommand{\odd}{\text{\tiny $\mathcal{O}$}}
\newenvironment{smat}{\left[\begin{smallmatrix}}{\end{smallmatrix}\right]}
\title{$X^\top AX=B$}
\author{Alberto Borobia\thanks{Facultad de Ciencias, Universidad Nacional de Educaci\'on a Distancia (UNED), email: {\tt aborobia@mat.uned.es}}, Roberto Canogar\thanks{Facultad de Ciencias, Universidad Nacional de Educaci\'on a Distancia (UNED), email: {\tt rcanogar@mat.uned.es}}, and Fernando De Ter\'{a}n\thanks{Departamento de Matem\'{a}ticas, Universidad Carlos III de Madrid, email: {\tt fteran@math.uc3m.es}}}
\date{\today}
\title{On the consistency of the matrix equation $X^\top A X=B$ when $B$ is symmetric: the case where CFC($A$) includes skew-symmetric blocks}
\begin{document}

\maketitle

\begin{abstract}
  In this paper, which is a follow-up to [A. Borobia, R. Canogar, F. De Ter\'an, {\em Mediterr. J. Math}. 18, 40 (2021)], we provide a necessary and sufficient condition for the matrix equation $X^\top AX=B$ to be consistent when $B$ is symmetric. The condition depends on the canonical form for congruence of the matrix $A$, and is proved to be necessary for all matrices $A$, and sufficient for most of them. This result improves the main one in the previous paper, since the condition is stronger than the one in that reference, and the sufficiency is guaranteed for a larger set of matrices (namely, those whose canonical form for congruence, CFC($A$), includes skew-symmetric blocks).
\end{abstract}

\medskip

{\bf Keywords:} Matrix equation, transpose, congruence, $\top$-Riccati equation, Canonical Form for Congruence, symmetric matrix, bilinear form.

\medskip

{\bf Mathematics subject classification MSC2020:} 15A21, 15A24, 15A63. 

\section{Introduction}

Let $A\in\mathbb C^{n\times n}$ and let $B\in\mathbb C^{m\times m}$  be a symmetric matrix. We are interested in the consistency of the matrix equation 
\begin{equation}\label{maineq}
    X^\top AX=B,
\end{equation}
where $(\cdot)^\top$ denotes the transpose. To be more precise, we want to obtain necessary and sufficient conditions for \eqref{maineq} to be consistent. The main tool to get these conditions is the {\em canonical form for congruence}, {CFC} (see Theorem \ref{cfc_th}), because \eqref{maineq} is consistent if and only if the equation that we obtain after replacing the matrices $A$ and/or  $B$ by their CFCs is consistent. The CFC is a direct sum of three kinds of blocks of different sizes, named Type-0, Type-I, and Type-II, and the idea is to take advantage of this structure to analyze Eq. \eqref{maineq}. In particular, the only symmetric canonical blocks are $I_1=[1]$ and $0_1=[0]$, so the CFC of the symmetric matrix $B$ is of the form CFC$(B)=I_m\oplus 0_k$ (where $I_m$ and $0_k$ are, respectively, a direct sum of $m$ and $k$ copies of $I_1$ and $0_1$). With the help of  Lemma \ref{basic_laws_lemma} we can get rid of the null block $0_k$, so the equation we are interested in is
\begin{equation}\label{maineqbis}
X^\top AX=I_m
\end{equation}
with $m\geq 1$.

In \cite{bcd} we introduced $\tau(A)$, a quantity that  depends on the number of certain Type-0, Type-I, and Type-II blocks appearing in the CFC of $A$, and we  proved in \cite[Th. 2]{bcd} that if  Eq. \eqref{maineqbis} is consistent then $m\leq\tau(A)$. Moreover, the main result of that paper,  \cite[Th. 8]{bcd}, establishes that if the CFC of $A$ contains neither  $H_2(-1)$ nor $H_4(1)$ blocks (which are specific Type-II blocks) then Eq. \eqref{maineqbis} is consistent if and only if  $m\leq\tau(A)$. This is not  necessarily true if we allow the CFC of $A$ to contain blocks $H_2(-1)$ and/or $H_4(1)$ (for instance, it is not true for $A=H_2(-1)$ nor $A=H_4(1)$). 

In the present work  we  introduce a new quantity $\upsilon(A)$, that depends also on the number of certain Type-0, Type-I, and Type-II blocks appearing in the CFC of $A$. In Theorem \ref{necessary_th} we will prove that if Eq. \eqref{maineqbis} is consistent  then  $m\leq\min\{\tau(A),\upsilon(A)\}$. Moreover, according to the main result in the present work (Theorem \ref{main_th}), if  the CFC of $A$ does not contain $H_4(1)$ blocks, then Eq. \eqref{maineqbis} is consistent if and only if $m\leq\min\{\tau(A),\upsilon(A)\}$. However, this is not necessarily true if the CFC contains blocks $H_4(1)$ (it is not true,  for instance, for   $A=H_4(1)$). 

Note that the main result of this paper improves the main one in \cite{bcd} in two senses: (i) the condition here is stronger than the one there; and (ii) the characterization is guaranteed for a larger set of matrices.

In the title we have referred to ``the case where CFC($A$) includes skew-symmetric blocks". This highlights the fact that, compared to \cite{bcd}, in the present work the main result is applied to matrices whose CFC  contains $H_2(-1)$ blocks, which are the only nonzero skew-symmetric blocks in a CFC.

The interest on Eq. \eqref{maineq} goes back to, at least, the 1920's \cite{wedderburn1921}, and it has been mainly devoted to describing the solution, $X$, for matrices $A,B$ over finite fields and when $A$ and/or $B$ have some specific structure \cite{buckhiester1,buckhiester2,buckhiester3,carlitz,fulton79,hodges,wei-zhang}. More recently, some related equations have been analyzed \cite{ikramov14} and, in particular, in connection with applications \cite{bimp,benner-palitta,benzi-viviani}. In \cite{bcd-skew} we have addressed the consistency of Eq. \eqref{maineq} when $B$ is skew-symmetric, where it is emphasized the connection between the consistency of \eqref{maineq} and the dimension of the largest subspace of $\CC^n$ for which the bilinear form represented by $A$ is skew-symmetric and non-degenerate. The same connection holds after replacing skew-symmetric by symmetric, which is the structure considered in the present work. 

The paper is organized as follows. In Section \ref{framework_sec} we introduce the basic notation and definitions (like the CFC), and we also recall some basic results that are used later. In Section \ref{rho_sigma_sec} the quantities $\tau(A)$ and $\upsilon(A)$ are introduced. Section \ref{necessary_sec} presents the necessary condition  for Eq. \eqref{maineqbis} to be consistent (Theorem \ref{necessary_th}), whereas in Section \ref{main_sec}  we show that when the CFC of $A$ does not contain blocks $H_4(1)$ this condition is sufficient as well (Theorem \ref{main_th}). In between these two sections, Section \ref{absorb_sec} is devoted to introduce the tools (by means of several technical lemmas) that are used to prove the sufficiency of the condition. Finally, in Section \ref{conclusion_sec} we summarize the main contributions of this work and indicate the main related open question.

\section{Basic approach and definitions}\label{framework_sec}

Throughout the manuscript, $I_n$ and $0_n$ denote, respectively, the identity and the null matrix with size $n\times n$. By $0_{m\times n}$ we denote the null matrix of size $m\times n$. By $\mathfrak i$ we denote the imaginary unit (namely, $\mathfrak i^2=-1$), and by $e_j$ we denote the $j$th canonical vector (namely, the $j$th column of the identity matrix) of the appropriate size. The notation $M^{\oplus k}$ stands for a direct sum of $k$ copies of the matrix $M$.

Following the approach in \cite{bcd} and \cite{bcd-skew}, a key tool in our developments is the {\em canonical form for congruence} (CFC). For the ease of reading we first recall the CFC, that depends on the following matrices:
\begin{itemize}
    \item $
J_k(\mu):=\left[\begin{array}{c@{\mskip8mu}c@{\mskip8mu}c@{\mskip8mu}c}
\mu&1\\[-4pt]&\ddots&\ddots\\[-4pt]&&\mu&1\\[-2pt]&&&\mu\end{array}\right]
$ is a $k\times k$ {\em Jordan block associated with
$\mu\in\CC$};
\item $
\Gamma_k:=\left[\begin{array}{c@{\mskip8mu}c@{\mskip8mu}c@{\mskip8mu}c@{\mskip8mu}c@{\mskip8mu}c}0&&&&&(-1)^{k+1}\\[-4pt]
&&&&\iddots&(-1)^k\\[-4pt]&&&-1&\iddots&\\&&1&1&&\\&-1&-1&&&\\1&1&&&&0\end{array}\right]_{k\times k}$
\qquad for $k\geq1$ (note that $\Gamma_1=I_1=[1]$); and
\item $
    H_{2k}(\mu):=\begin{bmatrix}
    0&I_k\\
     J_k(\mu)&0
    \end{bmatrix},
$ for $k\geq1$,
where $J_k(\mu)$ is a $k\times k$ Jordan block associated with $\mu\in\CC$.
\end{itemize}

\begin{theorem}\label{cfc_th}{\rm (Canonical form for congruence, CFC) \cite[Th. 1.1]{hs2006}.}
Each square complex matrix is congruent to a direct sum, uniquely determined up to permutation of addends, of canonical matrices of the following three types
\begin{center}
\begin{tabular}{|c|c|}\hline
     Type 0& $J_k(0)$ \\\hline
     Type I& $\Gamma_k$\\\hline
     Type II&\begin{tabular}{c}$H_{2k}(\mu)$,\\ \footnotesize$0 \neq\mu\neq(-1)^{k+1}$\\
\footnotesize($\mu$ is determined up to replacement by $\mu^{-1}$)\end{tabular}\\\hline
\end{tabular}
\end{center}
\end{theorem}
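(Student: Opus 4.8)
The plan is to prove the theorem by the classical reduction of congruence to similarity via the \emph{cosquare}, handling the singular and nonsingular parts of $A$ separately. The first step is to peel off the singular part: working with the bilinear form $(x,y)\mapsto x^\top Ay$ and a regularization procedure that successively splits off the degenerate directions, I would show that every $A$ is congruent to a direct sum $A_1\oplus N$, where $A_1$ is nonsingular and $N$ can be brought to a direct sum of nilpotent Jordan blocks $J_k(0)$. These are exactly the Type-0 blocks, so after this reduction it suffices to classify nonsingular matrices up to congruence.

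For the nonsingular part I would use the cosquare $\mathcal C(A):=A^{-\top}A$. A direct computation shows that if $B=S^\top AS$ then $B^{-\top}B=S^{-1}(A^{-\top}A)S$, so congruent nonsingular matrices have similar cosquares. Moreover $\mathcal C(A)^{-1}=A^{-1}A^\top$ is similar to $\mathcal C(A)^\top$, hence to $\mathcal C(A)$ itself, which forces the Jordan structure of any cosquare to be invariant under $\mu\mapsto\mu^{-1}$. The crux of the whole argument — and the step I expect to be the main obstacle — is the converse: two nonsingular matrices with similar cosquares are congruent. I would prove this by writing down the matrix equation that a candidate congruence must satisfy and constructing a solution from the Jordan data of $\mathcal C(A)$, block by block; this existence-of-congruence lemma is where the genuine work lies.

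The third step is to match the admissible cosquare Jordan structures with the canonical blocks. Here $\mathcal C\big(H_{2k}(\mu)\big)=J_k(\mu)\oplus J_k(\mu)^{-\top}$, which is similar to $J_k(\mu)\oplus J_k(\mu^{-1})$, and $\mathcal C(\Gamma_k)=J_k\!\big((-1)^{k+1}\big)$. Thus a reciprocal pair $J_k(\mu)\oplus J_k(\mu^{-1})$ with $\mu\neq\pm1$ is realized by a single block $H_{2k}(\mu)$, with $\mu$ ambiguous up to replacement by $\mu^{-1}$ (matching the stated indeterminacy), while a Jordan block at the self-reciprocal eigenvalues $\pm1$ that occurs unpaired is realized by a $\Gamma_k$. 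The delicate bookkeeping is concentrated at $\mu=\pm1$: the inversion symmetry imposes parity constraints that decide which eigenvalue-$(\pm1)$ blocks may stand alone (appearing as $\Gamma_k$) and which must occur paired (appearing as $H_{2k}((-1)^k)$). The exclusion $\mu\neq(-1)^{k+1}$ in the Type-II list is exactly what removes the redundancy, since $\mathcal C\big(H_{2k}((-1)^{k+1})\big)$ is similar to $J_k((-1)^{k+1})^{\oplus 2}=\mathcal C(\Gamma_k\oplus\Gamma_k)$, so the converse lemma gives $H_{2k}((-1)^{k+1})\cong\Gamma_k\oplus\Gamma_k$.

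Finally, existence follows by assembling the canonical blocks dictated by the Jordan form of $\mathcal C(A_1)$ (together with the $J_k(0)$ from the singular part), and uniqueness up to permutation follows from the uniqueness of the Jordan form of the cosquare combined with the $\pm1$ bookkeeping above, which pins down the split between $\Gamma_k$ and $H_{2k}((-1)^k)$. Besides the cosquare converse lemma, the secondary difficulty I anticipate is making the regularization of the singular part rigorous and verifying that it produces precisely the blocks $J_k(0)$ with no further singular invariants.
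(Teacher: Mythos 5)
The paper does not prove this statement: Theorem~\ref{cfc_th} is quoted verbatim from Horn and Sergeichuk \cite[Th.~1.1]{hs2006}, so there is no in-paper proof to compare against. Your outline does follow the route that the cited source (and the classical literature) actually takes --- regularize to split off the singular part as a direct sum of blocks $J_k(0)$, then classify the nonsingular part through the cosquare $A^{-\top}A$, whose similarity class is a complete congruence invariant --- and your computations of $\mathcal C\big(H_{2k}(\mu)\big)$ and of the redundancy $H_{2k}\big((-1)^{k+1}\big)\cong\Gamma_k\oplus\Gamma_k$ are correct and consistent with what the paper itself verifies in special cases (e.g., in the proof of Lemma~\ref{joining-Hs_lem}).

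As a proof, however, the proposal has genuine gaps, and you have located them yourself. First, the converse cosquare lemma --- two nonsingular matrices with similar cosquares are congruent (Lemma~\ref{cosquares_lem}, \cite[Lemma~2.1]{hs2006}) --- is the load-bearing step of the entire classification, and ``writing down the matrix equation a candidate congruence must satisfy and constructing a solution from the Jordan data'' describes where a proof would go rather than giving one; the known arguments require a real idea, e.g.\ reducing to the case of equal cosquares and correcting the similarity by a suitable function of the cosquare, or passing through Kronecker's theory of the pair $(A,A^\top)$ as the paper does when identifying $\Gamma_k$ with $\widetilde\Gamma_k$. Second, the ``parity bookkeeping'' at $\mu=\pm1$ is not mere bookkeeping: one must prove that $J_k(1)$ with $k$ even, and $J_k(-1)$ with $k$ odd, can never occur with odd multiplicity in a cosquare (for instance $[-1]$ is the cosquare of no invertible $1\times1$ matrix, since $A^{-\top}A=-1$ forces $A=-A^\top=0$); this realizability statement is exactly what makes the list of blocks both sufficient for existence and non-redundant for uniqueness, and it does not follow from the inversion symmetry alone. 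Third, the regularization of the singular part --- existence of a congruence $A\cong A_1\oplus\bigoplus_i J_{k_i}(0)$ with $A_1$ invertible, together with the uniqueness of the $k_i$ and of $A_1$ up to congruence --- also needs an argument. Until these three steps are supplied, what you have is a correct plan, essentially the plan of the source the paper cites, but not yet a proof.
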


Following \cite{bcd-skew}, the notation $A \rightsquigarrow B$ means that the equation $X^\top A X=B$ is consistent, and $A\overset{X_0}{\rightsquigarrow} B$ means that $X_0^\top AX_0=B$. The following result, that was presented in \cite[Lemma 4]{bcd-skew}, includes some basic laws of consistency that are straightforward to check. 

\begin{lemma} \label{basic_laws_lemma}
\label{directsum.lemma} \label{TransitivityOfConsistency_lem} \label{suma_directa_rightsquigarrows_lem} 
{\rm({\bf Laws of consistency})}. For any complex square matrices $A,B,C,A_i,B_i$, the following properties hold:
\begin{enumerate}[{\rm(i)}]
    \item {\bf Addition law.} \label{additive_law}
    If $A_i \overset{X_i}{\rightsquigarrow} B_i$, for $1\leq i\leq k$, then  $\bigoplus_{i=1}^{k} A_i \overset{X}{\rightsquigarrow} \bigoplus_{i=1}^{k} B_i$, with $X=\bigoplus_{i=1}^{k} X_i$.
    
    \item {\bf Transitivity law.}  \label{transitive_law}
    If $A \overset{X_0}{\rightsquigarrow} B$ and $B \overset{Y_0}{\rightsquigarrow} C$, then $A \overset{X_0Y_0}{\rightsquigarrow} C$.  
    
\item {\bf Permutation law.} \label{perm_law} $\bigoplus_{i=1}^\ell A_i\rightsquigarrow \bigoplus_{i=1}^\ell A_{\sigma(i)}$, for any permutation $\sigma$ of $\{1,\hdots,\ell\}$.
    
    \item {\bf Elimination law.}\label{elim_law} 
    $A \oplus B \overset{X_0}{\rightsquigarrow} A$, with $X_0=\left[\begin{smallmatrix}I_n\\ 0\end{smallmatrix}\right]$, and where $n$ is the size of $A$.

    \item{\bf Canonical reduction law.}\label{reduction_law}  If $A$ and $B$ are congruent to, respectively, $\widetilde A$ and $\widetilde B$, then $A\rightsquigarrow B$ if and only if $\widetilde A\rightsquigarrow\widetilde B$.
    
    \item {\bf $J_1(0)$-law.}  \label{zeroes_law}
    For $k,\ell\geq 0$ we have  $A\oplus J_1(0)^{\oplus k} \rightsquigarrow  B \oplus J_1(0)^{\oplus \ell}$ if and only if  $A \rightsquigarrow  B$.

\end{enumerate}
\end{lemma}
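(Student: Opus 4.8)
The plan is to unwind the definition $A\overset{X}{\rightsquigarrow}B\iff X^\top A X=B$ in each case and either exhibit an explicit solution or chain together laws already verified, using the transitivity law (ii). None of the six statements needs more than elementary block-matrix algebra, so the proof is a sequence of short verifications; I would order them so that the later, compound statements reuse the earlier, atomic ones.

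First I would dispatch the three ``atomic'' laws by direct computation. For the \textbf{addition law} (i), writing $X=\bigoplus_i X_i$ and using that a product of block-diagonal matrices is block-diagonal gives $X^\top\big(\bigoplus_i A_i\big)X=\bigoplus_i X_i^\top A_i X_i=\bigoplus_i B_i$. For the \textbf{transitivity law} (ii), I would simply compute $(X_0Y_0)^\top A (X_0Y_0)=Y_0^\top(X_0^\top A X_0)Y_0=Y_0^\top B Y_0=C$. For the \textbf{elimination law} (iv), substituting $X_0=\left[\begin{smallmatrix}I_n\\0\end{smallmatrix}\right]$ into $X_0^\top(A\oplus B)X_0$ collapses the product to $A$ in one line. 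The \textbf{permutation law} (iii) is essentially atomic as well: I would take $X_0$ to be the block-permutation matrix $P$ realizing $\sigma$; since $P$ is a real permutation matrix we have $P^\top=P^{-1}$, and the congruence $P^\top\big(\bigoplus_i A_i\big)P$ permutes the diagonal blocks exactly into $\bigoplus_i A_{\sigma(i)}$.

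Next I would handle the \textbf{canonical reduction law} (v) by observing that a congruence is a two-sided, invertible instance of consistency: if $P^\top A P=\widetilde A$ with $P$ invertible, then both $A\overset{P}{\rightsquigarrow}\widetilde A$ and $\widetilde A\overset{P^{-1}}{\rightsquigarrow}A$, and likewise for $B,\widetilde B$. Assuming $A\rightsquigarrow B$, I would then chain $\widetilde A\rightsquigarrow A\rightsquigarrow B\rightsquigarrow\widetilde B$ through (ii); the reverse implication is symmetric.

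Finally, the \textbf{$J_1(0)$-law} (vi) is the one place where a little assembly is needed, and I expect it to be the only mild obstacle: unlike the others it is a biconditional and it changes the matrix size, so I cannot just read off one explicit $X$. The clean route is again transitivity via the elementary ``append/drop zeros'' maps. On one hand, $[\,I\ 0\,]$ used as $X_0$ realizes $M\rightsquigarrow M\oplus 0_j$ for any $M$ (a one-line computation), and on the other hand the elimination law (iv) realizes $M\oplus 0_j\rightsquigarrow M$. Recalling that $J_1(0)=[0]$, so $J_1(0)^{\oplus k}=0_k$, for the ``if'' direction (assuming $A\rightsquigarrow B$) I would chain $A\oplus J_1(0)^{\oplus k}\rightsquigarrow A\rightsquigarrow B\rightsquigarrow B\oplus J_1(0)^{\oplus\ell}$ (eliminate, hypothesis, append zeros), and for the ``only if'' direction (assuming the left-hand side) I would chain $A\rightsquigarrow A\oplus J_1(0)^{\oplus k}\rightsquigarrow B\oplus J_1(0)^{\oplus\ell}\rightsquigarrow B$ (append zeros, hypothesis, eliminate). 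The only thing to track throughout is that the dimensions of the rectangular factors match, which they do by construction.
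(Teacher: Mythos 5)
Your proof is correct. The paper does not write out a proof of this lemma at all---it imports the statement from \cite{bcd-skew} with the remark that the laws are ``straightforward to check''---and your direct verifications (explicit solution matrices for (i)--(iv), and chaining appends/eliminations of zero blocks through the transitivity law for (v) and (vi)) are exactly the routine computations being alluded to.
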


By the Canonical reduction law, in Eq.~\eqref{maineq} we will assume without loss of generality that $A$ and $B$ are given in CFC.

When $B$ is symmetric, the CFC of $B$ is $I_{m_1}\oplus 0_{m_2}$. Then, as a consequence of the Canonical reduction law, we may restrict ourselves to the case where the right-hand side of  \eqref{maineq} is of this form. Moreover, as a consequence of the $J_1(0)$-law, in our developments we will consider $B=I_m$ in Eq.~\eqref{maineq} (leading to Eq.~\eqref{maineqbis}). Therefore, our goal is to characterize those matrices $A$ such that $A\rightsquigarrow I_m$, for a fixed $m\geq1$. This will be done by concatenating several equations $A\rightsquigarrow A_1\rightsquigarrow\cdots\rightsquigarrow A_k\rightsquigarrow I_m$, since the Transitivity law allows us to conclude that $A\rightsquigarrow I_m$. For this reason, we will use the word ``transformation" for a single equation $A\rightsquigarrow B$.

One way to determine the CFC of an invertible matrix $A$ is by means of its {\em cosquare}, $A^{-\top}A$ (see \cite{hs2006}), where $(\cdot)^{-\top}$ denotes the transpose of the inverse. Moreover, the cosquare will be used to determine whether two given invertible matrices are congruent, using the following result.

\begin{lemma}\label{cosquares_lem}\emph{(\cite[Lemma 2.1]{hs2006})}.
Two invertible matrices are congruent if and only if their cosquares are similar.
\end{lemma}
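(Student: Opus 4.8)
The plan is to prove the two implications separately: necessity (congruence $\Rightarrow$ similar cosquares) is a one-line computation, whereas sufficiency will be reduced, via the canonical form for congruence (Theorem~\ref{cfc_th}), to reading off the canonical blocks from the Jordan structure of the cosquare.

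\textbf{Necessity.} Suppose $A$ and $B$ are invertible and congruent, say $B=S^{\top}AS$ with $S$ invertible. Transposing gives $B^{\top}=S^{\top}A^{\top}S$, hence $B^{-\top}=S^{-1}A^{-\top}S^{-\top}$, and therefore
\begin{equation*}
B^{-\top}B = S^{-1}A^{-\top}S^{-\top}S^{\top}AS = S^{-1}\left(A^{-\top}A\right)S .
\end{equation*}
Thus the cosquares are similar (via $S$), and this part uses nothing beyond the definitions.

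\textbf{Sufficiency.} Here I would pass to canonical forms. Since $A$ is invertible its CFC has no Type-0 block (every $J_k(0)$ is singular), so by Theorem~\ref{cfc_th} $A$ is congruent to a direct sum $\widetilde A$ of blocks $\Gamma_k$ and $H_{2k}(\mu)$; likewise $B$ is congruent to such a sum $\widetilde B$. By the necessity part, $A^{-\top}A$ is similar to $\widetilde A^{-\top}\widetilde A$ and $B^{-\top}B$ to $\widetilde B^{-\top}\widetilde B$, so the hypothesis yields that $\widetilde A^{-\top}\widetilde A$ and $\widetilde B^{-\top}\widetilde B$ are similar. Because the cosquare of a direct sum is the direct sum of the cosquares, it then suffices to show that the similarity class (i.e.\ the Jordan form) of $\widetilde A^{-\top}\widetilde A$ determines the multiset of canonical blocks of $\widetilde A$; this forces $\widetilde A$ and $\widetilde B$ to share the same CFC up to permutation of blocks, and since $A$ and $B$ are then each congruent to this common canonical form, they are congruent to each other.

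\textbf{Key computation and main obstacle.} The crux is to compute the Jordan form of the cosquare of each block: one checks that $\Gamma_k^{-\top}\Gamma_k$ is similar to a single Jordan block $J_k\!\left((-1)^{k+1}\right)$, while $H_{2k}(\mu)^{-\top}H_{2k}(\mu)$ is similar to $J_k(\mu)\oplus J_k(\mu^{-1})$. I expect the main obstacle to be the reconstruction for the eigenvalues $\pm1$, since a Type-II block $H_{2k}(\mu)$ may have $\mu=\pm1$ (the only constraint is $\mu\neq 0,(-1)^{k+1}$), so eigenvalue and Jordan sizes alone do not immediately separate the Type-I from the Type-II contributions. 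The resolution is a parity argument: for eigenvalue $+1$, Type-I blocks contribute single Jordan blocks of \emph{odd} size, whereas $H_{2k}(1)$ (possible only for $k$ even) contributes \emph{pairs} of even-size blocks; for eigenvalue $-1$ the parities are interchanged; and every eigenvalue $\lambda\neq 0,\pm1$ appears in reciprocal pairs $\lambda,\lambda^{-1}$ arising from blocks $H_{2k}(\lambda)$. Hence the Jordan data splits unambiguously into Type-I and Type-II parts, the multiset of canonical blocks is recovered, and the two canonical forms coincide. Verifying the two block-cosquare computations together with this parity bookkeeping is the only substantive work; everything else is routine.
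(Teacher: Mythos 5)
Your proof is correct, but note that the paper itself offers no proof of this lemma: it is imported verbatim as \cite[Lemma 2.1]{hs2006}, so there is nothing internal to compare against. What you have written is a legitimate self-contained derivation from Theorem~\ref{cfc_th}: the necessity computation is exactly right, the block cosquares are $\Gamma_k^{-\top}\Gamma_k\sim J_k\big((-1)^{k+1}\big)$ and $H_{2k}(\mu)^{-\top}H_{2k}(\mu)\sim J_k(\mu)\oplus J_k(\mu^{-1})$ (the paper itself verifies the second for $\mu=-1$ in the proof of Lemma~\ref{joining-Hs_lem}, and cites \cite{semaj} for the first), and your parity bookkeeping at the eigenvalues $\pm1$ is precisely what makes the block multiset recoverable from the Jordan data: odd-size Jordan blocks at $+1$ come singly from $\Gamma_{\mathrm{odd}}$ while even-size ones come in pairs from $H_{2k}(1)$ with $k$ even (which is admissible since $(-1)^{k+1}=-1\neq 1$), with the roles of the parities swapped at $-1$, and reciprocal pairing elsewhere; only the existence half of Theorem~\ref{cfc_th} is needed, so there is no circularity, and uniqueness of the CFC for invertible matrices actually drops out as a by-product. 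This route is genuinely different from the proof in the cited source, which does not pass through the canonical form at all: Horn and Sergeichuk reduce to the case of equal cosquares and produce the congruence directly via a square root of $BA^{-1}$ taken as a polynomial in that matrix. Their argument is logically upstream of the CFC and so can be used in proving it; yours is shorter to believe once Theorem~\ref{cfc_th} is granted, and has the added value of exhibiting explicitly how the similarity class of the cosquare encodes the Type-I/Type-II block structure, which is the bookkeeping this paper actually exploits.
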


\subsection{The matrices $\widetilde\Gamma_k$ and $\widetilde H_{2k}(\mu)$} 

Instead of the blocks $\Gamma_k$ and $H_{2k}(\mu)$ we will use the following blocks, for $k\geq 1$:
\begin{equation*}
    \widetilde\Gamma_k:=\begin{bmatrix}
     1&1\\-1&0&1\\&1&0&1\\&&\ddots&\ddots&\ddots\\&&&(-1)^k&0&1\\&&&&(-1)^{k+1}&0
    \end{bmatrix}_{k\times k} \qquad \text{and} \qquad
 \widetilde H_{2k}(\mu) :=\left[   \begin{array}{ccccccc}
 0   & 1 &     &   &     &   & \\
 \mu & 0 & 1   &   &     & \mathbf{0}  & \\
     & 0 & 0   & 1 &     &   & \\
     &   & \mu & 0 & 1   &   & \\
     &   &     & 0 & \ddots   & \ddots & \\
     & \mathbf{0}  &     &   & \ddots & 0 & 1\\
     &   &     &   &    & \mu &0\\
\end{array} 
\right]_{2k\times2k}.
\end{equation*}

We claim that $\widetilde\Gamma_k$ and $\widetilde H_{2k}(\mu)$ are congruent to, respectively, $\Gamma_k$ and $H_{2k}(\mu)$. 

In order to prove that $\Gamma_k$ and $\widetilde\Gamma_k$ are congruent, we give an indirect proof. Two matrix pairs $(A,B)$ and $(A',B')$ are {\em strictly equivalent} if there are invertible matrices $R$ and $S$ such that $R A S = A'$ and $R B S = B'$. It is known (see, for instance, \cite[Lemma 1]{semaj}) that two matrices $A,B\in\CC^{n\times n}$ are congruent if and only if $(A,A^\top)$  and  $(B,B^\top)$ are strictly equivalent. Since $(\Gamma_k,\Gamma_k^\top)$ and $\big(J_k\big((-1)^{k+1}\big),I_k\big)$ are strictly equivalent (see \cite[Th. 4]{semaj}) and $\big(J_k\big((-1)^{k+1}\big),I_k\big)$ and $(\widetilde\Gamma_k,\widetilde\Gamma_k^\top)$ are strictly equivalent as well (see Eq. (5) in \cite{fhs08}), the pairs $(\Gamma_k,\Gamma_k^\top)$ and $(\widetilde\Gamma_k,\widetilde\Gamma_k^\top)$ are strictly equivalent, so $\Gamma_k$ and $\widetilde\Gamma_k$ are congruent. Another alternative to show that $\Gamma_k$ and $\widetilde\Gamma_k$ are congruent is by checking that their cosquares are similar to $J_k((-1)^{k+1})$ and then using Lemma \ref{cosquares_lem}.

To see that $H_{2k}(\mu)$ and $\widetilde H_{2k}(\mu)$ are congruent, consider the permutation matrix
\begin{align*}
P_{2k}=\begin{bmatrix}
e_1 & e_{k+1} & e_2 & e_{k+2} & \cdots & e_k & e_{2k}
\end{bmatrix},
\end{align*}
and note that
\[
\widetilde H_{2k}(\mu)=P_{2k}^\top H_{2k}(\mu) P_{2k}=
P_{2k}^\top 
\begin{bmatrix}  0&I_k\\ J_k(\mu)&0 \end{bmatrix} P_{2k}.\]
Therefore, the congruence by $P_{2k}$ is actually a simultaneous permutation of rows and columns of $H_{2k}(\mu)$. More precisely, we start with 
$\begin{smat}  0&I_k\\ J_k(\mu)&0 \end{smat}$ and
move rows (and columns) $(k+1, k+2, \ldots, 2k)$ to, respectively, rows (and columns) $(2,4, \ldots, 2k)$; and we also move rows (and columns) $(1, 2, \ldots, k)$ to rows (and columns) $(1,3, \ldots, 2k-1)$, respectively. So the $1$'s coming from the block $I_k$ and the $1$'s coming from the superdiagonal of the block $J_k(\mu)$ in $H_{2k}(\mu)$, get shuffled to form the superdiagonal of $P_{2k}^\top H_{2k}(\mu) P_{2k}$. Moreover, the $\mu$'s from the block $J_k(\mu)$ in $H_{2k}(\mu)$ are taken to the positions $(2,1),(4,3),\hdots,(2k,2k-1)$ in $\widetilde H_{2k}(\mu)$. 

The advantage in using the matrices $\widetilde\Gamma_k$ and $\widetilde H_{2k}(\mu)$ instead of, respectively, $\Gamma_k$ and $H_{2k}(\mu)$, is that the first ones are tridiagonal, and this structure is more convenient for our proofs. Tridiagonal canonical blocks have been already used in \cite{fhs08} (actually, $\widetilde\Gamma_k$ is exactly the one introduced in Eq. (3)  for $\varepsilon=1$ in that reference).

For the rest of the manuscript, we will replace the blocks $\Gamma_k$ by $\widetilde\Gamma_k$ and $H_{2k}(\mu)$ by $\widetilde H_{2k}(\mu)$, so, in particular, we will assume that the CFC is a direct sum of blocks $J_k(0)$, $\widetilde\Gamma_k$, and $\widetilde H_{2k}(\mu)$. The only exceptions to this rule are $\Gamma_1$ which is equal to $\widetilde\Gamma_1$, and $H_2(-1)$ which is equal to $\widetilde H_2(-1)$.

\section{The quantities $\tau(A)$ and $\upsilon(A)$}\label{rho_sigma_sec}

The main result of this work (Theorem~\ref{main_th}) depends on two intrinsic quantities of the matrix A, that we denote by $\tau(A)$ and $\upsilon(A)$.
In this section, we introduce them and present some basic properties that will be used later.

\begin{definition} \label{components_CFC_def}
Let $A$ be  a complex $n\times n$ matrix  and consider its {\rm CFC}, where  
\begin{enumerate}[\rm(i)]
\item\label{type01} $j_1$  is the number of  Type-$0$ blocks with size $1$;

\item  $j_{\odd}$  
is the number of Type-$0$ blocks with odd size at least 3;

\item  $\gamma_{\odd}$ 
is the number of  Type-I blocks with odd size;

\item  $\gamma_{\varepsilon}$ is the number of Type-I blocks with even size;

\item  $h^-_{2\odd}$ 
is the number of Type-II blocks $\widetilde H_{4k-2}(-1)$ for any $k\geq1$; and

\item  $h^+_{2\varepsilon}$ is the number of Type-II blocks $\widetilde H_{4k}(1)$ for any $k\geq1$;

\item\label{othertypes} it has an arbitrary number of other Type-$0$ and Type-II blocks.
\end{enumerate}
Then we define the quantities
\begin{equation} \label{rho(A)_formula}
    \tau(A):=\frac{n-j_1+j_{\odd}+\gamma_{\odd}+2 h^+_{2\varepsilon}}{2}
     \ \ \text{ and } \ \upsilon(A):= n-j_1-j_{\odd}-\gamma_{\varepsilon}-2h^-_{2\odd}.
\end{equation}
\end{definition}

The quantities $\tau$ and $\upsilon$ satisfy the following essential additive  properties (the proof is straightforward):
\begin{equation} \label{additive}\tau(A_1\oplus\cdots \oplus A_k)=\tau(A_1)+\cdots+\tau(A_k)\quad \text{and} \quad   \upsilon(A_1\oplus\cdots \oplus A_k)=\upsilon(A_1)+\cdots+\upsilon(A_k).
\end{equation}

The notation for the quantities in Definition \ref{components_CFC_def} follows the one in \cite{bcd-skew}. In particular, the letters used for the number of blocks in parts (i)--(vi) resemble the notation for the corresponding blocks (see \cite[Rem. 6]{bcd-skew}).
In \cite{bcd} we had not yet adopted this notation. The correspondence between the notation in that paper and the one used here is the following:  $d \rightarrow j_1, r\rightarrow j_\odd, s\rightarrow \gamma_\odd, t\rightarrow h^+_{2\varepsilon}$. The  values $\gamma_\varepsilon$ and $h^-_{2\odd}$ played no role in \cite{bcd}.

 Table \ref{taupsilon_table} contains the values of $\tau(A)$ and $\upsilon(A)$ for $A$ being a single canonical block in the CFC. We have displayed the values in three categories, from top to bottom, namely: first, those with $\tau(A)=\upsilon(A)$; second, those for which $\tau(A)<\upsilon(A)$; and, finally, those with $\tau(A)>\upsilon(A)$.

\begin{table}[]
    $$
\begin{array}{|l|c|c|c|}\hline
     A&{\rm conditions}&\tau(A) & \upsilon(A)\\\hline\hline
     J_1(0)&-& 0&0\\
     J_3(0)&-&2&2\\
     \Gamma_1&-&1&1\\
     \widetilde\Gamma_2&-&1&1\\\hline
     J_{2k+1}(0)&k\geq 2&k+1&2k\\
     J_{2k}(0)&k\geq 1&k&2k\\
     \widetilde\Gamma_{2k+1}&k\geq 1&k+1&2k+1\\
     \widetilde\Gamma_{2k}&k\geq 2&k&2k-1\\
     \widetilde H_{4k-2}(-1)&k\geq 2&2k-1&4k-4\\
     \widetilde H_{4k}(1)&k\geq 1&2k+1&4k\\
     \widetilde H_{2k}(\mu)&k\geq1,\mu\neq0,\pm1&k&2k\\\hline
     H_2(-1)&-&1&0\\\hline
\end{array}
$$
    \caption{\footnotesize Values of $\tau$ and $\upsilon$ for any single canonical block.}
    \label{taupsilon_table}
\end{table}

Notice that $\tau(A)\leq \upsilon (A)$ whenever the CFC of $A$ consists of just a single canonical block, except for $H_2(-1)$. This, together with \eqref{additive}, implies the following result.

\begin{lemma}\label{rho<=sigma_lem}
If the {\rm CFC} of $A$ has no blocks of type $H_2(-1)$ then $\tau(A)\leq \upsilon(A)$.
\end{lemma}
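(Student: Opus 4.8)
The plan is to reduce the inequality to a blockwise verification and then recombine via additivity. First I would write the CFC of $A$ as a direct sum of canonical blocks $A = A_1 \oplus \cdots \oplus A_\ell$, where each $A_i$ is one of $J_k(0)$, $\widetilde\Gamma_k$, or $\widetilde H_{2k}(\mu)$. By the additive property \eqref{additive}, we have $\tau(A) = \sum_{i=1}^\ell \tau(A_i)$ and $\upsilon(A) = \sum_{i=1}^\ell \upsilon(A_i)$, so it suffices to prove $\tau(A_i) \leq \upsilon(A_i)$ for each individual block $A_i$.

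Next I would invoke Table \ref{taupsilon_table}, which records $\tau$ and $\upsilon$ for every single canonical block. Inspecting the table, every block satisfies $\tau \leq \upsilon$, with the sole exception of $H_2(-1)$, which is the unique entry in the bottom category (where $\tau > \upsilon$, namely $1 > 0$). Since by hypothesis the CFC of $A$ contains no $H_2(-1)$ block, each summand $A_i$ lies in the first two categories of the table, and therefore $\tau(A_i) \leq \upsilon(A_i)$ holds for all $i$. Summing these inequalities over $i$ and applying additivity yields $\tau(A) = \sum_i \tau(A_i) \leq \sum_i \upsilon(A_i) = \upsilon(A)$, which is the claim.

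There is no substantive obstacle here: the entire content is the blockwise comparison, which is already tabulated. The only point requiring attention is to confirm that $H_2(-1)$ is genuinely the \emph{only} single-block exception, and this is exactly what the placement of the rows in Table \ref{taupsilon_table} asserts. As an alternative that avoids relying on the table, one could verify the blockwise inequalities directly from the defining formulas in \eqref{rho(A)_formula}; for each block type this reduces to an elementary comparison (for example, $k \leq 2k$ for the nonsymmetric Type-II blocks), but it amounts to the same computation carried out case by case.
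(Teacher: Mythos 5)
Your proposal is correct and follows exactly the paper's own argument: the paper likewise observes (via Table~\ref{taupsilon_table}) that $\tau\leq\upsilon$ for every single canonical block except $H_2(-1)$, and then concludes by the additivity property~\eqref{additive}. Nothing further is needed.
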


In order for the condition that we obtain (in Theorem \ref{necessary_th}) to be sufficient, the following notion is key.

\begin{definition}\label{invariant_def}
The transformation $A\rightsquigarrow B$ is {\em $(\tau,\upsilon)-$invariant} if the following three conditions are satisfied:
\begin{itemize}
    \item $X^\top AX=B$ is consistent,
    \item $\tau(A)=\tau(B)$, and 
    \item $\upsilon(A)=\upsilon(B)$.
\end{itemize}
\end{definition}

\section{A necessary condition}\label{necessary_sec}

In this section, we introduce a necessary condition on the matrix $A$ for $A\rightsquigarrow I_m$ (namely, for Eq.~\eqref{maineq} to be consistent when $B$ is symmetric and invertible). This condition improves the one provided in \cite[Th. 2]{bcd}, namely $m\leq\tau(A)$.
 
 \begin{theorem}\label{necessary_th}
If $A$ is a complex square matrix such that $X^\top AX= I_m$ is consistent, then $m \leq  \min\{\tau(A),\upsilon(A)\}$.
\end{theorem}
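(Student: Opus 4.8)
The plan is to establish the bound $m\le\min\{\tau(A),\upsilon(A)\}$ by proving the two inequalities $m\le\tau(A)$ and $m\le\upsilon(A)$ separately, since the first is already available from \cite[Th. 2]{bcd}. Thus the real work is showing that consistency of $X^\top AX=I_m$ forces $m\le\upsilon(A)$. Because of the additive property \eqref{additive} of $\upsilon$ and the fact that $I_m=I_1^{\oplus m}$ with $\upsilon(I_1)=\upsilon(\Gamma_1)=1$, the target inequality is equivalent to $\upsilon(I_m)\le\upsilon(A)$. This reframing suggests the natural strategy: show that the quantity $\upsilon$ cannot increase under a consistent transformation, i.e.\ that $A\rightsquigarrow B$ implies $\upsilon(B)\le\upsilon(A)$. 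Applying this monotonicity to $B=I_m$ yields the claim immediately.

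To prove monotonicity of $\upsilon$ under consistency, I would first try to give $\upsilon(A)$ an intrinsic, coordinate-free interpretation rather than working from the block-counting formula \eqref{rho(A)_formula}. The introduction hints strongly at the right object: the connection (mentioned for the skew-symmetric case in \cite{bcd-skew} and asserted to hold verbatim here) between consistency of $X^\top AX=B$ and the dimension of the largest subspace $V\subseteq\CC^n$ on which the bilinear form $x^\top A y$ restricts to a symmetric, non-degenerate form. Concretely, $A\rightsquigarrow I_m$ holds iff there is an $n\times m$ matrix $X$ of full column rank with $X^\top AX=I_m$; the columns of $X$ span an $m$-dimensional subspace $V$ on which the bilinear form is symmetric (since $I_m$ is) and non-degenerate (since $I_m$ is invertible). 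So $m$ is bounded by the maximal dimension of such a subspace, and the heart of the matter is to show that this maximal dimension equals $\upsilon(A)$ — or at least is bounded above by it. I would compute, block by block using the canonical form, the largest dimension of a symmetric non-degenerate subspace for each of $J_k(0)$, $\widetilde\Gamma_k$, and $\widetilde H_{2k}(\mu)$, and check that these match the per-block values of $\upsilon$ recorded in Table \ref{taupsilon_table}; the additive law \eqref{additive} for $\upsilon$ together with an argument that such subspaces decompose (or are bounded) across the direct-sum structure would then assemble the global bound.

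The main obstacle I anticipate is precisely this last assembly step: the naive hope that a maximal symmetric non-degenerate subspace for $A=\bigoplus_i A_i$ splits as a direct sum of such subspaces for the individual $A_i$ is false in general, because the bilinear form on the whole space can couple different blocks, and one can sometimes do strictly better (or the structure can be subtler) than the block-by-block optimum. Handling the cross terms — showing that mixing blocks cannot exceed $\sum_i\upsilon(A_i)$ — is where a careful degeneracy/rank argument is needed, likely exploiting that the symmetric part and skew-symmetric part of $A$ decouple in a controlled way on each canonical block. The appearance of the specific correction terms $\gamma_\varepsilon$ and $2h^-_{2\odd}$ in the formula for $\upsilon$ (the data that played no role in \cite{bcd}) is exactly the signature of even-sized Type-I blocks and the skew-symmetric $\widetilde H_{4k-2}(-1)$ blocks contributing one less dimension than their size would naively allow, so I expect the per-block computation for these two families to be the delicate part. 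An alternative, more computational route — should the subspace argument prove hard to make airtight — would be to argue directly from the formula: assume $X^\top AX=I_m$ with $X$ of full column rank, and derive rank or parity constraints on $X$ relative to the Type-0, even Type-I, and $\widetilde H_{4k-2}(-1)$ blocks that force $m\le n-j_1-j_\odd-\gamma_\varepsilon-2h^-_{2\odd}$ directly.
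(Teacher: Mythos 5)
Your reduction of the problem to the single inequality $m\le\upsilon(A)$ (with $m\le\tau(A)$ quoted from \cite[Th. 2]{bcd}) is exactly how the paper proceeds, and your monotonicity heuristic ($A\rightsquigarrow B$ should force $\upsilon(B)\le\upsilon(A)$) is in fact a true statement. But the proposal never actually proves $m\le\upsilon(A)$: everything after the reformulation is a plan, and the plan as described would not go through. First, the per-block computation you propose does not reproduce the numbers in Table \ref{taupsilon_table}: the largest subspace on which the form of a single canonical block is symmetric and non-degenerate has dimension $\min\{\tau,\upsilon\}$ of that block, not $\upsilon$ (for $J_2(0)$, for instance, $\upsilon=2$ but no $2$-dimensional such subspace exists, since $J_2(0)$ is singular and hence not congruent to $I_2$; the maximum is $1=\tau(J_2(0))$). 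Second, the cross-term assembly that you correctly identify as the main obstacle is precisely the point where the argument is missing, and it cannot be patched by a ``subspaces decompose across the direct sum'' claim, which, as you note yourself, is false in general.

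The idea that closes the gap --- and is essentially the entire content of the paper's proof --- is to symmetrize. The quantity $\upsilon(A)$ is, up to the $J_1(0)$ blocks, exactly $\rank(A+A^\top)$: this is the block-by-block identity \eqref{n-rank(A+AT)}, imported from the proof of \cite[Th. 8]{bcd-skew}, and it is the intrinsic, coordinate-free interpretation of $\upsilon$ that you were looking for. With it the bound is immediate: if $X_0^\top AX_0=I_m$, then transposing and adding gives $X_0^\top(A+A^\top)X_0=2I_m$, so $m=\rank(2I_m)\le\rank(A+A^\top)=n-j_\odd-\gamma_\varepsilon-2h^-_{2\odd}=\upsilon(A)$. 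Because rank can only drop under $M\mapsto X^\top MX$ no matter how the canonical blocks are coupled, this single observation dissolves both the delicate per-block analysis and the cross-term problem. Your ``alternative computational route'' points in this direction by invoking rank constraints, but without identifying $A+A^\top$ as the matrix whose rank equals $\upsilon(A)$ it remains a gesture rather than a proof.
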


\begin{proof} 
In \cite[Th. 2]{bcd} it was proved that $m\leq\tau(A)$ (though the notation $\tau$ was not used there).
Let us see that $m\leq \upsilon(A)$ as well. Assuming that the CFC of $A$ is as in Definition \ref{components_CFC_def}, in the proof of Theorem 8 of \cite{bcd-skew} it was  showed that
\begin{equation}\label{n-rank(A+AT)}
    n-\rank(A+A^\top)=j_\odd+\gamma_\varepsilon+2h_{2\odd}^-.
\end{equation}
By hypothesis, there exists some $X_0 \in\CC^{n\times m}$ such that     $X_0^\top A X_0=I_m$. Now, transposing this equation and adding it up, we get $X_0^\top(A+A^\top)X_0= 2I_m$. From this identity, and using \eqref{n-rank(A+AT)}, we obtain
$$
m=\rank(X_0^\top(A+A^\top)X_0)\leq \rank(A+A^\top) =n-j_\odd-\gamma_\varepsilon-2h_{2\odd}^-,
$$
so $m\leq n-j_\odd-\gamma_\varepsilon-2h_{2\odd}^- =\upsilon(A),$ as claimed.
\end{proof}

\section{Absorbing the  $H_2(-1)$ blocks}\label{absorb_sec}

The main goal in the rest of the manuscript is to prove that the necessary condition presented in Theorem \ref{necessary_th} is also sufficient when the CFC of $A$ does not contain $\widetilde H_4(1)$ blocks. If the CFC of $A$ contains neither  $H_2(-1)$ nor $\widetilde H_4(1)$ blocks, this is already known \cite[Th. 8]{bcd}. In that case, as a consequence of Lemma \ref{rho<=sigma_lem}, the condition for $A\rightsquigarrow I_m$ reduces to $m \leq \tau(A)$. When the CFC of $A$ does not contain blocks $\widetilde H_4(1)$ but contains blocks $H_2(-1)$, this is no longer true (see, for instance, Example 1 in \cite{bcd}), and then the quantity $\upsilon(A)$ comes into play. This is an indication that the presence of blocks $H_2(-1)$ in the CFC of $A$ deserves a particular treatment. In this section, we show how to deal with this type of blocks. To be more precise, we see that some blocks $H_2(-1)$ can be  combined with other type of blocks in order to ``eliminate" them by means of a $(\tau,\upsilon)-$invariant transformation. In this case, we say that the block $H_2(-1)$ has been ``absorbed". We will consider separately the cases of Type-0, Type-I, and Type-II blocks, in Sections \ref{type0_sec}, \ref{typeI_sec}, and \ref{type-II_sec}, respectively.

The following notation is used in the proofs of this section: $E_{\alpha\times\beta}$ denotes the $\alpha\times\beta$ matrix whose $(\alpha,1)$ entry is equal to $1$ and the remaining entries are zero.

\subsection{The case of Type-0 blocks}\label{type0_sec}

In Lemma \ref{joining-Js_lem}, we show how to ``absorb" a block $H_2(-1)$ with a Type-0 block, $J_k(0)$, with $k\neq3$. In the statement, $J_0(0)$ stands for an empty block.

\begin{lemma}\label{joining-Js_lem}
The following  transformation is  ($\tau,\upsilon)-$invariant:

\begin{equation}\label{reducing-type0}
    J_{k}(0) \oplus H_2(-1) {\rightsquigarrow} J_{k-2}(0) \oplus \Gamma_1^{\oplus 2},\qquad  \mbox{for $k=2$ and $k  \geq 4$}.
\end{equation}
\end{lemma}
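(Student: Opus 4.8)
To establish the lemma I must verify two things at once: that the equation $X^\top(J_k(0)\oplus H_2(-1))X=J_{k-2}(0)\oplus\Gamma_1^{\oplus2}$ is consistent, and that $\tau$ and $\upsilon$ take equal values on its two sides. The plan is to dispose of the invariance of $\tau$ and $\upsilon$ first, as it is purely arithmetic, and then invest the real effort in exhibiting an explicit solution $X\in\CC^{(k+2)\times k}$. Using the additivity \eqref{additive} together with the single-block values in Table \ref{taupsilon_table}, the equality of $\tau$ reduces to $\tau(J_k(0))=\tau(J_{k-2}(0))+1$ (since $\tau(H_2(-1))=1$ and $\tau(\Gamma_1^{\oplus2})=2$), and the equality of $\upsilon$ reduces to $\upsilon(J_k(0))=\upsilon(J_{k-2}(0))+2$ (since $\upsilon(H_2(-1))=0$ and $\upsilon(\Gamma_1^{\oplus2})=2$). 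Both identities follow from Table \ref{taupsilon_table} after a short split according to the parity of $k$. I would also remark here that the $\tau$-identity is precisely what fails at $k=3$, where $\tau(J_3(0))-\tau(J_1(0))=2\neq1$; this is exactly why the value $k=3$ is excluded.

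For the consistency, note first that $\Gamma_1^{\oplus2}=I_2$, so the target is $J_{k-2}(0)\oplus I_2$. Write the canonical basis of $\CC^{k+2}$ as $e_1,\dots,e_k$ (the $J_k(0)$ part) together with $e_{k+1},e_{k+2}$ (the $H_2(-1)$ part), and set $\langle x,y\rangle:=x^\top(J_k(0)\oplus H_2(-1))y$. The only nonzero products are $\langle e_i,e_{i+1}\rangle=1$ for $1\le i\le k-1$ and $\langle e_{k+1},e_{k+2}\rangle=1=-\langle e_{k+2},e_{k+1}\rangle$; in particular all diagonal products vanish. The columns $x_1,\dots,x_k$ of $X$ that I propose keep the initial chain $x_i=e_i$ for $1\le i\le k-3$ (realizing the interior of $J_{k-2}(0)$), and set
\[
x_{k-1}=e_{k-1}+e_k+e_{k+1},\qquad x_k=\mathfrak{i}\,(e_{k-1}-e_k+e_{k+2}),
\]
each of which has self-pairing $1$ and which are mutually orthogonal for the form, thereby producing the $I_2$ block, together with the corrected vector
\[
x_{k-2}=e_{k-2}-e_k-e_{k+1}+e_{k+2}.
\]
(For $k=2$ the chain and $x_{k-2}$ are absent, leaving only $x_1=e_1+e_2+e_3$ and $x_2=\mathfrak{i}(e_1-e_2+e_4)$, which recovers the base case.)

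The remaining work is a direct evaluation of all entries of $X^\top(J_k(0)\oplus H_2(-1))X$, which is routine. The one genuinely delicate point — and the step I expect to be the main obstacle — is the interaction across the boundary between the retained chain and the two $I_2$-block vectors. Since the quadratic form $x\mapsto\langle x,x\rangle$ is supported entirely on the nilpotent chain and has zero diagonal there, any vector with self-pairing $1$ must involve two consecutive $e_j$'s; forcing $x_{k-1},x_k$ to use $e_{k-1}$ makes them pair nontrivially with $e_{k-2}$, the last chain vector. The correction $-e_k-e_{k+1}+e_{k+2}$ added to $x_{k-2}$ is tailored to cancel exactly these spurious pairings, and it succeeds only because the antisymmetry of $H_2(-1)$ supplies the opposite-sign contribution (for instance a $-\mathfrak{i}$ coming from the chain part is cancelled by a $+\mathfrak{i}$ coming from the $e_{k+1},e_{k+2}$ pair). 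Once this cancellation is checked on the five interacting vectors $e_{k-2},\dots,e_{k+2}$, all other entries vanish for trivial support reasons, giving $X^\top(J_k(0)\oplus H_2(-1))X=J_{k-2}(0)\oplus I_2$ and hence the claimed $(\tau,\upsilon)$-invariant transformation.
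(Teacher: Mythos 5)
Your proof is correct and takes essentially the same route as the paper's: the $(\tau,\upsilon)$-arithmetic is identical, and your explicit columns $x_{k-2}$, $x_{k-1}$, $x_k$ are exactly the last three columns of the paper's solution $I_{k-3}\oplus X_3$ up to swapping the final two (harmless, since the target block $\Gamma_1^{\oplus 2}=I_2$ is invariant under that permutation). The only difference is presentational --- you verify the identity by pairing columns against the bilinear form, while the paper organizes the same computation as a block product via $J_k(0)=\left[\begin{smallmatrix}J_{k-3}(0)&E_{(k-3)\times 3}\\ 0&J_3(0)\end{smallmatrix}\right]$ --- and all your pairings check out.
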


\begin{proof}
By considering separately the cases where $k$ in \eqref{reducing-type0} is odd ($k=2t+1$) and even ($k=2t$), using \eqref{additive} and looking at Table \ref{taupsilon_table}, we obtain:
$$
\begin{array}{lclclc}
    \tau\big(J_{2t+1}(0) \oplus H_2(-1)\big)&=&t+2&=&\tau\big(J_{2t-1}(0) \oplus \Gamma_1^{\oplus 2}\big),&\mbox{for $t\geq2$},\\ \upsilon\big(J_{2t+1}(0) \oplus H_2(-1)\big)&=&2t&=&\upsilon\big(J_{2t-1}(0) \oplus \Gamma_1^{\oplus 2}\big),&\mbox{for $t\geq2$},\\
    \tau\big(J_{2t}(0) \oplus H_2(-1)\big)&=&t+1&=&\tau\big(J_{2t-2}(0) \oplus \Gamma_1^{\oplus 2}\big),&\mbox{for $t\geq1$},\\ \upsilon\big(J_{2t}(0) \oplus H_2(-1)\big)&=&2t&=&\upsilon\big(J_{2t-2}(0) \oplus \Gamma_1^{\oplus 2}\big),&\mbox{for $t\geq1$},
\end{array}
$$
so both sides of the transformation in (\ref{reducing-type0}) have the same $\tau$ and $\upsilon$. Now let us prove the consistency. 

The result is true for $k=2$, since 
$$
J_{2}(0) \oplus H_2(-1)\overset{X_2}{\rightsquigarrow} \Gamma_1^{\oplus 2}, \quad\text{ for } X_2=\begin{smat} 
 \mathfrak{i} & 1 \\
 -\mathfrak{i} & 1 \\
 0 & 1 \\
 \mathfrak{i} & 0 
\end{smat}.$$ 
Let us prove it for $k\geq 4$.  Note that $
J_{a+b}(0)=\begin{smat}
J_a(0)&E_{a\times b}\\0&  J_b(0)
\end{smat}$. 
If    
$X_3={\scriptsize\left[\begin{array}{c|c}
1 & \begin{matrix}
0 & 0
\end{matrix} \\ \hline
\begin{matrix}
0 \\ -1 \\ -1 \\ 1 
\end{matrix} & \text{\normalsize $X_2$}
\end{array}\right]}$
 then 
\begin{align*}
\left(I_{k-3}\oplus X_3\right)^\top \Big(J_{k}(0) \oplus H_2(-1) \Big) \left(I_{k-3}\oplus X_3\right) &=
\begin{bmatrix}
I_{k-3} & 0 \\
0 & X_3^\top
\end{bmatrix}
\begin{bmatrix}
J_{k-3}(0) & E_{(k-3)\times 5} \\
0 & J_3(0) \oplus H_2(-1)
\end{bmatrix}
\begin{bmatrix}
I_{k-3} & 0 \\
0 & X_3
\end{bmatrix} \\
&= 
\begin{bmatrix}
J_{k-3}(0) & E_{(k-3)\times 5} X_3 \\
0 & X_3^\top \left(J_3(0) \oplus H_2(-1)\right) X_3
\end{bmatrix}\\
&= \begin{bmatrix}
J_{k-3}(0) & E_{(k-3)\times 3} \\
0 & J_1(0)\oplus \Gamma_1^{\oplus 2}
\end{bmatrix} \\ 
&= J_{k-2}(0)\oplus \Gamma_1^{\oplus 2},
\end{align*}
as wanted.
\end{proof}

We will also use the following result, whose proof is straightforward.

\begin{lemma}\label{j3_lem}
The following  transformation is $(\tau,\upsilon)-$invariant:
$$
J_3(0) \overset{X_0}{\rightsquigarrow} \Gamma_1^{\oplus 2},\qquad \mbox{for $X_0= \begin{smat} 1 & 0 \\1 & \mathfrak i \\0 & -\mathfrak i \end{smat}$}.
$$
\end{lemma}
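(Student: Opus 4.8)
The statement is that $J_3(0)\overset{X_0}{\rightsquigarrow}\Gamma_1^{\oplus 2}$ is a $(\tau,\upsilon)$-invariant transformation, with $X_0=\begin{smat} 1 & 0 \\1 & \mathfrak i \\0 & -\mathfrak i \end{smat}$. By Definition~\ref{invariant_def}, I must verify three things: that the equation $X_0^\top J_3(0)X_0=\Gamma_1^{\oplus 2}$ holds, that $\tau(J_3(0))=\tau(\Gamma_1^{\oplus 2})$, and that $\upsilon(J_3(0))=\upsilon(\Gamma_1^{\oplus 2})$. Since the explicit matrix $X_0$ is already supplied, this is entirely a matter of confirmation rather than construction, and none of the three checks involves any genuine difficulty.

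\textbf{The equality of the quantities.} First I would read off the relevant values from Table~\ref{taupsilon_table}. The entry for $J_3(0)$ gives $\tau(J_3(0))=2$ and $\upsilon(J_3(0))=2$, and the entry for $\Gamma_1$ gives $\tau(\Gamma_1)=1$ and $\upsilon(\Gamma_1)=1$. Using the additive property~\eqref{additive}, I conclude $\tau(\Gamma_1^{\oplus 2})=\tau(\Gamma_1)+\tau(\Gamma_1)=2$ and likewise $\upsilon(\Gamma_1^{\oplus 2})=2$. Hence both $\tau$ and $\upsilon$ agree on the two sides, which settles the second and third conditions of Definition~\ref{invariant_def}.

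\textbf{The consistency.} The remaining task is to confirm $X_0^\top J_3(0)X_0=\Gamma_1^{\oplus 2}$, where $J_3(0)$ is the $3\times 3$ nilpotent Jordan block with $1$'s on the superdiagonal and $\Gamma_1^{\oplus 2}=I_2$. This is a direct $3\times 3$ by $3\times 2$ matrix multiplication. I would compute $J_3(0)X_0$ first (which amounts to shifting the rows of $X_0$ up by one and appending a zero row), and then left-multiply by $X_0^\top$, checking that the resulting $2\times 2$ matrix equals $I_2$. Since the paper's convention (via the entries of Table~\ref{taupsilon_table} and the line $\Gamma_1=\widetilde\Gamma_1=[1]$) is that $\Gamma_1^{\oplus 2}=I_2$, the target is just the identity; the off-diagonal entries should vanish and the diagonal entries should equal $1$.

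\textbf{The main obstacle.} There is no substantive obstacle here: the result is flagged in the text as having a ``straightforward'' proof, and indeed once $X_0$ is given, all three conditions reduce to mechanical verification. The only point requiring any attention is bookkeeping with the imaginary unit $\mathfrak i$ in the arithmetic $X_0^\top J_3(0)X_0$, where cross-terms of the form $\mathfrak i\cdot(-\mathfrak i)=1$ are precisely what produce the identity on the right-hand side; one must simply be careful not to drop signs. Thus I would present the proof as a single displayed computation of the product together with the one-line reading of $\tau$ and $\upsilon$ from the table.
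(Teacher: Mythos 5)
Your proposal is correct and matches what the paper intends: the paper simply declares this proof ``straightforward,'' and your verification (reading $\tau=\upsilon=2$ for both sides from Table~\ref{taupsilon_table} with the additivity~\eqref{additive}, and the direct computation $X_0^\top J_3(0)X_0=I_2$, where the cross-term $\mathfrak i\cdot(-\mathfrak i)=1$ gives the second diagonal entry) is exactly that straightforward check. Nothing is missing.
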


\subsection{The case of Type-I blocks}\label{typeI_sec}

Lemma \ref{joining-Gammas_lem} is the counterpart of Lemma \ref{joining-Js_lem} for Type-I blocks, where $\Gamma_k$ is replaced by $\widetilde\Gamma_k$.

\begin{lemma}\label{joining-Gammas_lem}
The following transformation is $(\tau,\upsilon)-$invariant:
\begin{equation}\label{reducing-typeI}
    \widetilde\Gamma_{k} \oplus H_2(-1)  {\rightsquigarrow} \Gamma_1^{\oplus 2} \oplus \widetilde\Gamma_{k-2},\ \ for \ k\geq 3.
\end{equation}

\end{lemma}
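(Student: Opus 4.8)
The plan is to follow exactly the template of Lemma \ref{joining-Js_lem}, adapting it to the tridiagonal block $\widetilde\Gamma_k$. First I would verify the invariance of the quantities $\tau$ and $\upsilon$. Splitting into the odd case $k=2t+1$ (with $t\geq1$) and the even case $k=2t$ (with $t\geq2$), I would read off from Table \ref{taupsilon_table} that $\tau(\widetilde\Gamma_{2t+1})=t+1$ and $\upsilon(\widetilde\Gamma_{2t+1})=2t+1$, while $\tau(\widetilde\Gamma_{2t})=t$ and $\upsilon(\widetilde\Gamma_{2t})=2t-1$. Adding the contributions $\tau(H_2(-1))=1$, $\upsilon(H_2(-1))=0$, and $\tau(\Gamma_1^{\oplus2})=\upsilon(\Gamma_1^{\oplus2})=2$, and using the additivity \eqref{additive}, the two sides of \eqref{reducing-typeI} match: for instance $\tau(\widetilde\Gamma_{2t+1}\oplus H_2(-1))=t+2=\tau(\Gamma_1^{\oplus2}\oplus\widetilde\Gamma_{2t-1})$ and $\upsilon(\widetilde\Gamma_{2t+1}\oplus H_2(-1))=2t+1=\upsilon(\Gamma_1^{\oplus2}\oplus\widetilde\Gamma_{2t-1})$, and similarly in the even case. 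This part is purely arithmetic and presents no difficulty.

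The substantive part is exhibiting an explicit congruence realizing the consistency $\widetilde\Gamma_k\oplus H_2(-1)\rightsquigarrow\Gamma_1^{\oplus2}\oplus\widetilde\Gamma_{k-2}$. The plan is to imitate the block-recursive structure used for $J_k(0)$: since $\widetilde\Gamma_k$ is tridiagonal, I expect to be able to write it as a small ``corner'' block (absorbing the top-left $3\times3$ or so of $\widetilde\Gamma_k$ together with the $H_2(-1)$) attached to a trailing $\widetilde\Gamma_{k-3}$ or $\widetilde\Gamma_{k-2}$ block, coupled only through a sparse off-diagonal entry. Concretely, I would seek a transforming matrix of the form $I_{k-3}\oplus X_3$ (or a similar padded form), where $X_3$ is a small rectangular matrix handling the interaction between the tail of $\widetilde\Gamma_k$ and the $H_2(-1)$ block, chosen so that the congruence sends the corner to $\Gamma_1^{\oplus2}$ plus a single leftover $\Gamma_1$ (or the appropriate connecting entry) that merges with the trailing $\widetilde\Gamma$ block to reconstitute $\widetilde\Gamma_{k-2}$. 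A natural base case would be to first establish the transformation for $k=3$ with an explicit small matrix $X_0$ solving $\widetilde\Gamma_3\oplus H_2(-1)\rightsquigarrow\Gamma_1^{\oplus2}\oplus\widetilde\Gamma_1$, analogous to the role of $X_2$ in Lemma \ref{joining-Js_lem}, and then bootstrap to general $k\geq4$ by the block computation.

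The main obstacle I anticipate is bookkeeping the off-diagonal coupling correctly. Unlike the nilpotent $J_k(0)$, where the only connecting entry is the single $1$ in $E_{a\times b}$ and the superdiagonal structure is clean, the block $\widetilde\Gamma_k$ has both the $\pm1$ subdiagonal and superdiagonal entries with alternating signs $(-1)^j$, together with the leading $1$ in the $(1,1)$ position. When I split $\widetilde\Gamma_k=\begin{smat}\widetilde\Gamma_{k-2}&*\\ *&C\end{smat}$ the coupling term $*$ carries these signs, and I must check that the alternating-sign pattern of the reconstructed trailing block genuinely equals $\widetilde\Gamma_{k-2}$ (and not some sign-twisted variant); getting the parity of the signs to align is where care is needed. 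A cleaner route, which I would pursue if the direct recursion becomes unwieldy, is to reduce everything to the $k=3$ case by the Transitivity and Addition laws: write $\widetilde\Gamma_k\oplus H_2(-1)\rightsquigarrow$ (something)$\oplus\widetilde\Gamma_{k-3}$, peel off a $\widetilde\Gamma_3$, apply the base transformation, and reassemble; by Lemma \ref{basic_laws_lemma} the composed equation is consistent. The only genuine verification is then the single explicit matrix identity for the corner, which I would confirm by direct multiplication.
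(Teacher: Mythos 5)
Your $(\tau,\upsilon)$-invariance computation is correct and coincides with the paper's, and your base case $\widetilde\Gamma_3\oplus H_2(-1)\rightsquigarrow\Gamma_1^{\oplus2}\oplus\widetilde\Gamma_1$ is the right starting point. The gap is in the recursion. You propose a transforming matrix of the shape $I_{k-3}\oplus X_3$, i.e.\ you keep the head of $\widetilde\Gamma_k$ fixed and let a small corner matrix act on its tail together with $H_2(-1)$. For even $k$ this provably cannot work, whatever $X_3$ is: write the columns of $X_3$ as $x_1,x_2,x_3$, with $x_2,x_3$ the columns producing the two $\Gamma_1$'s. For the output to be a direct sum in which the preserved head $\widetilde\Gamma_{k-3}$ couples only to the last row/column of $\widetilde\Gamma_{k-2}$ and not to $\Gamma_1^{\oplus2}$, the head-to-tail coupling of $\widetilde\Gamma_k$ (a single $1$ in position $(k-3,k-2)$) forces the first entries of $x_2$ and $x_3$ to vanish. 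But then $x_i^\top Mx_i=\tfrac12x_i^\top(M+M^\top)x_i$ with $M=\widetilde\Gamma_k(k-2\!:\!k)\oplus H_2(-1)$, and since $H_2(-1)$ is skew-symmetric and, for $k$ even, the only nonzero entries of $\widetilde\Gamma_k(k-2\!:\!k)+\widetilde\Gamma_k(k-2\!:\!k)^\top$ lie in the first row and column of that corner, we get $x_i^\top Mx_i=0\neq1$. This is precisely why the paper does the opposite: it permutes $H_2(-1)$ to the front and uses $X_4\oplus I_{k-4}$, transforming the \emph{leading} corner $H_2(-1)\oplus\widetilde\Gamma_4$ into $\Gamma_1^{\oplus2}\oplus\widetilde\Gamma_2$ (the symmetric part of $\widetilde\Gamma_k$ that must supply the rank of $I_2$ is concentrated at the top, in the $(1,1)$ and $(2,3)$ positions) and then reattaches the untouched tail via the identity $\widetilde\Gamma_k(5\!:\!k)=\widetilde\Gamma_{k-2}(3\!:\!k-2)$; the shift by $2$ is what keeps the alternating signs aligned and resolves the parity worry you correctly raise.

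Your fallback route is also unavailable: $\widetilde\Gamma_k$ is an indecomposable canonical block, so it is not congruent to $\widetilde\Gamma_3\oplus\widetilde\Gamma_{k-3}$, and there is no direct-sum splitting to which the Addition law can be applied --- ``peeling off a $\widetilde\Gamma_3$'' from a single block $\widetilde\Gamma_k$ has no meaning under congruence. A correct proof has to exhibit an explicit corner matrix and check \emph{both} off-diagonal coupling blocks (unlike $J_k(0)$, the block $\widetilde\Gamma_k$ is not triangular, so the coupling appears on both sides of the diagonal); none of the required matrices appear in your proposal.
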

\begin{proof} Considering again separately the cases where $k$ in \eqref{reducing-typeI} is odd ($k=2t+1$) and even ($k=2t$), using \eqref{additive} and looking at Table \ref{taupsilon_table}, we obtain: 
$$
\begin{array}{lclclc}
\tau\big(\widetilde\Gamma_{2t+1}\oplus H_2(-1)\big)&=&t+2&=&\tau\big(\Gamma_1^{\oplus2}\oplus\widetilde\Gamma_{2t-1}\big)&\mbox{for $t\geq 1$},\\
\upsilon\big(\widetilde\Gamma_{2t+1}\oplus H_2(-1)\big)&=&2t+1&=&\upsilon\big(\Gamma_1^{\oplus2}\oplus\widetilde\Gamma_{2t-1}\big)&\mbox{for $t\geq 1$},\\
\tau\big(\widetilde\Gamma_{2t}\oplus H_2(-1)\big)&=&t+1&=&\tau\big(\Gamma_1^{\oplus2}\oplus\widetilde\Gamma_{2t-2}\big)&\mbox{for $t\geq 2$},\\
\upsilon(\widetilde\Gamma_{2t}\oplus H_2(-1))&=&2t-1&=&\upsilon(\Gamma_1^{\oplus2}\oplus\widetilde\Gamma_{2t-2})&\mbox{for $t\geq 2$.}
\end{array}
$$
so both sides of the transformation in (\ref{reducing-typeI}) have the same $\tau$ and $\upsilon$. Now let us prove the consistency.

For $k=3$ we have
 $$\widetilde\Gamma_{3} \oplus H_2(-1)  \overset{P}{\rightsquigarrow} H_2(-1)\oplus \widetilde\Gamma_{3}  \overset{X_3}{\rightsquigarrow} \Gamma_1^{\oplus 3}, \text{ for } P=\begin{smat}
0&I_3\\I_2&0
\end{smat},  \text{ and } X_3=\begin{smat}
1 & 0 & \mathfrak{i} \\
 0 & -\mathfrak{i} & 0  \\
 0 & 1 & 0 \\
 -\mathfrak{i} & 0 & 1 \\
 \frac{\mathfrak{i}}{2} & 0 & \frac{1}{2} \\
 \end{smat},$$
as can be directly checked. For $k\geq4$ we are going to prove that
 $$\widetilde\Gamma_{k} \oplus H_2(-1)  \overset{P}{\rightsquigarrow} H_2(-1)\oplus \widetilde\Gamma_{k}  \overset{X_k}{\rightsquigarrow} \Gamma_1^{\oplus 2} \oplus \widetilde\Gamma_{k-2}, \text{ with $P=\begin{smat}
0&I_k\\I_2&0
\end{smat}$ and $X_k=
{\scriptsize\left[\begin{array}{ccc|c}
  &  &  & 0 \\
  &  &  & -\frac{\mathfrak{i}}{2} \\
  & \text{\normalsize $X_3$} &  & 0 \\
  &  &  & 0 \\
  &  &  & 0 \\ \hline
 0 & 0 & 0 & 1 \\
 \end{array}\right]}\oplus I_{k-4}$,}$$
where the first transformation is  just a block permutation. So for the rest of the proof we will focus on the second transformation. We use the following notation: $A(i:j)$ is the principal submatrix of $A$ containing the rows and columns from the $i$th to the $j$th ones.

If $k=4$ then $H_2(-1)\oplus \widetilde \Gamma_4 \overset{X_4}{\rightsquigarrow} \Gamma_1^{\oplus 2}\oplus \widetilde \Gamma_2$, for $X_4$ as above, as can be directly checked.

If $k>4$ then $H_2(-1)\oplus \widetilde \Gamma_k \overset{X_k}{\rightsquigarrow} \Gamma_1^{\oplus 2}\oplus \widetilde \Gamma_{k-2}$, for $X_k=X_4\oplus I_{k-4}$. To prove it we will use the identities
$$\widetilde\Gamma_k=\begin{bmatrix}
\widetilde\Gamma_4&E_{4\times(k-4)}\\E_{4\times(k-4)}^\top&\widetilde\Gamma_{k}(5: k)
\end{bmatrix} \quad \text{ and } \quad \widetilde\Gamma_{k-2}=\begin{bmatrix}
\widetilde\Gamma_2&E_{2\times(k-4)}\\E_{2\times(k-4)}^\top&\widetilde\Gamma_{k-2}(3: k-2)
\end{bmatrix}, $$
so that
\begin{align*}
\left(X_4\oplus I_{k-4}\right)^\top \left( H_2(-1) \oplus \widetilde \Gamma_{k}  \right) \left(X_4\oplus I_{k-4}\right) &=
\begin{bmatrix}
X_4^\top & 0 \\
0 & I_{k-4}
\end{bmatrix}
\begin{bmatrix}
H_2(-1) \oplus \widetilde \Gamma_4  & E_{6 \times (k-4)} \\
E_{6 \times (k-4)}^\top & \widetilde\Gamma_{k}(5:{ k})
\end{bmatrix}
\begin{bmatrix}
X_4 & 0 \\
0 & I_{k-4}
\end{bmatrix} \\
&= 
\begin{bmatrix}
X_4^\top \Big(H_2(-1) \oplus \widetilde \Gamma_4\Big) X_4  & X_4^\top E_{6 \times (k-4)} \\
E_{6 \times (k-4)}^\top X_4 & \widetilde\Gamma_{k}(5:k)
\end{bmatrix}\\
&= \begin{bmatrix}
\Gamma_1^{\oplus 2}\oplus \widetilde \Gamma_2  & E_{4 \times (k-4)} \\
E_{4 \times (k-4)}^\top &  \widetilde\Gamma_{k-2}(3: k-2)
\end{bmatrix} \\ 
&= \Gamma_1^{\oplus 2} \oplus \widetilde \Gamma_{k-2}
\end{align*}
where in the last-but-one equality we use that  $\widetilde\Gamma_{k}(5:k)=\widetilde\Gamma_{k-2}(3:k-2)$.

\end{proof}

\subsection{The case of Type-II blocks}\label{type-II_sec}


Finally, Lemma \ref{joining-Hs_lem} is the counterpart of Lemmas \ref{joining-Js_lem} and \ref{joining-Gammas_lem} for Type-II blocks. Again, instead of the blocks $H_{2k}(\mu)$ we use the tridiagonal version, $\widetilde H_{2k}(\mu)$. In the statement, $\widetilde H_0(\mu)$ stands for an empty block.

\begin{lemma}\label{joining-Hs_lem} The following  transformations are  ($\tau,\upsilon)-$invariant:
\begin{enumerate}[{\rm(i)}]
    \item $\widetilde{H}_{2k}(\mu) \oplus H_2(-1)\rightsquigarrow \widetilde{H}_{2k-2}(\mu) \oplus  \Gamma_1^{\oplus 2}$,  for $\mu\neq \pm 1$ and $k\geq  1$.
    
    \item $\widetilde{H}_{4k+2}(-1) \oplus H_2(-1)\rightsquigarrow\widetilde\Gamma_{2k}^{\oplus2}\oplus\Gamma_1^{\oplus2} $, for $k\geq 1$. 
    
    \item $\widetilde{H}_{4k}(1) \oplus H_2(-1) \rightsquigarrow\widetilde\Gamma_{2k-1}^{\oplus2}\oplus\Gamma_1^{\oplus2}$, for $k\geq 1$. 

\end{enumerate}
\end{lemma}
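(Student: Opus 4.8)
The plan is to verify, for each of the three transformations (i)--(iii), the two ingredients demanded by Definition \ref{invariant_def}: the numerical identities $\tau(\mathrm{LHS})=\tau(\mathrm{RHS})$ and $\upsilon(\mathrm{LHS})=\upsilon(\mathrm{RHS})$, and the consistency of the associated matrix equation. As in Lemmas \ref{joining-Js_lem} and \ref{joining-Gammas_lem}, the first ingredient is routine bookkeeping with Table \ref{taupsilon_table} and the additivity \eqref{additive}, splitting by the parity conventions and keeping the small-size exceptions $\widetilde\Gamma_2,\Gamma_1$ in mind, and taking care that $\widetilde H_{4k+2}(-1)$ corresponds to the table row $\widetilde H_{4k-2}(-1)$ with its index shifted by one. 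Carrying this out, one finds that both sides of (i) have $\tau=k+1$ and $\upsilon=2k$, while both sides of (ii) and of (iii) have $\tau=2k+2$ and $\upsilon=4k$. Thus the real content of the lemma is the consistency of the three transformations.

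For (i) I would follow the pattern of Lemmas \ref{joining-Js_lem} and \ref{joining-Gammas_lem}, exploiting that $\widetilde H_{2k}(\mu)$ is tridiagonal with a self-similar trailing principal submatrix $\widetilde H_{2k}(\mu)(3\!:\!2k)=\widetilde H_{2k-2}(\mu)$, equivalently $\widetilde H_{2k}(\mu)=\begin{smat}\widetilde H_2(\mu)&E_{2\times(2k-2)}\\0&\widetilde H_{2k-2}(\mu)\end{smat}$ with a single coupling entry (the superdiagonal $1$). First I would dispatch a couple of explicit base cases with $\mu$-dependent congruence matrices (the smallest being $\widetilde H_2(\mu)\oplus H_2(-1)\rightsquigarrow\Gamma_1^{\oplus 2}$, which is solvable because the symmetric part of $\widetilde H_2(\mu)$ is nondegenerate for $\mu\neq-1$). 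For larger $k$ I would bring $H_2(-1)$ next to the top $\widetilde H_4(\mu)$ corner by the Permutation law and apply a transformation of the form $X_{\mathrm{base}}\oplus I$; exactly as in Lemma \ref{joining-Gammas_lem}, the trailing $\widetilde H_{2k-4}(\mu)$ passes through untouched and the preserved coupling entry reglues the base output $\widetilde H_2(\mu)$ to this tail into a single $\widetilde H_{2k-2}(\mu)$, so the last row of $X_{\mathrm{base}}$ must be chosen to reproduce $E_{2\times(2k-2)}$.

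Parts (ii) and (iii) are the substantial cases, since here a single Type-II block fragments into \emph{two} Type-I blocks. For (ii) the guiding structural fact is the congruence $\widetilde H_4(-1)\cong\widetilde\Gamma_2^{\oplus 2}$ (both invertible with cosquare similar to $J_2(-1)^{\oplus 2}$, so Lemma \ref{cosquares_lem} applies), which already displays the two $\widetilde\Gamma_2$'s; together with $\widetilde H_{4k+2}(-1)(5\!:\!4k+2)=\widetilde H_{4k-2}(-1)$ this points to an induction on $k$ with an explicit base $k=1$ (an $8\times 6$ matrix realizing $\widetilde H_6(-1)\oplus H_2(-1)\rightsquigarrow\widetilde\Gamma_2^{\oplus 2}\oplus\Gamma_1^{\oplus 2}$) and a step that processes the top corner and invokes case $k-1$ on $\widetilde H_{4k-2}(-1)\oplus H_2(-1)$. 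For (iii) the parameter $\mu=1$ makes $\widetilde H_4(1)$ itself \emph{canonical}, so its corner cannot be pre-split; the fragmentation into $\widetilde\Gamma_{2k-1}^{\oplus 2}$ must instead be produced globally, with $H_2(-1)$ essential to trigger it. Here I expect either a single explicit family $X_k$ verified directly on the tridiagonal form, or an induction seeded by the compact base case $\widetilde H_4(1)\oplus H_2(-1)\rightsquigarrow\Gamma_1^{\oplus 4}$; in either case a permutation separating the odd- and even-indexed positions of $\widetilde H_{4k}(1)$ into two interleaved chains is the natural device.

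The main obstacle is precisely the extension/inductive step in (ii) and (iii). Since $\widetilde\Gamma_a\oplus\widetilde\Gamma_b$ is \emph{not} congruent to $\widetilde\Gamma_{a+b}$, one cannot detach the top corner as a free $\widetilde\Gamma_2^{\oplus 2}$ (resp. $\widetilde\Gamma$-head) summand and merely append it to the smaller-$k$ output: the two target blocks must be grown as \emph{connected} chains. Consequently the congruence matrix must be arranged so that the off-diagonal coupling produced from the superdiagonal $1$ of the Type-II block attaches the freshly created heads to the open ends of the two inductive $\widetilde\Gamma$-tails, reconstituting $\widetilde\Gamma_{2k}^{\oplus 2}$ (resp. $\widetilde\Gamma_{2k-1}^{\oplus 2}$) exactly, with no spurious entries surviving. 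Making this simultaneous two-chain gluing explicit -- choosing the rows of the transformation that carry the coupling and checking the cancellations forced by the tridiagonal, near-Toeplitz pattern -- is where the genuine work lies; by contrast the $\tau,\upsilon$ identities and part (i) are comparatively mechanical.
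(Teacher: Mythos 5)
Your bookkeeping of $\tau$ and $\upsilon$, and your plan for part (i), match the paper's proof: peel a small corner of the tridiagonal block, solve an explicit base case against $H_2(-1)$, and let the single coupling entry of $E_{\alpha\times\beta}$ reglue the untouched tail (the paper processes the trailing corner via $X_k=I_{2k-4}\oplus X_2$, but that is the same mechanism). The gap is in parts (ii) and (iii). You correctly diagnose that an induction which grows the two $\widetilde\Gamma$-chains step by step would require a ``simultaneous two-chain gluing'' --- since $\widetilde\Gamma_2\oplus\widetilde\Gamma_{2k-2}$ is not congruent to $\widetilde\Gamma_{2k}$ (their cosquares have different Jordan structures), the coupling entries would have to merge Jordan blocks --- but you then leave exactly that step, which is the entire content of these two parts, unresolved (``this is where the genuine work lies''). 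As written, the proposal stops at the acknowledged hard point rather than crossing it, and it is not at all clear that a transformation of the shape you describe exists.

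The idea you are missing is that the two chains never have to be built explicitly. The paper performs only a length-two reduction that keeps the Type-II block intact: $\widetilde H_{4k+2}(-1)\oplus H_2(-1)\rightsquigarrow\widetilde H_{4k}(-1)\oplus\Gamma_1^{\oplus2}$ and $\widetilde H_{4k}(1)\oplus H_2(-1)\rightsquigarrow\widetilde H_{4k-2}(1)\oplus\Gamma_1^{\oplus2}$, each realized by a single explicit matrix $I\oplus C$ exactly as in part (i), with no induction. The resulting blocks $\widetilde H_{4k}(-1)$ and $\widetilde H_{4k-2}(1)$ are instances of $H_{2j}(\mu)$ with $\mu=(-1)^{j+1}$, the parameter value that Theorem \ref{cfc_th} excludes from the canonical form, and for that very reason they are congruent to $\widetilde\Gamma_{2k}^{\oplus2}$ and $\widetilde\Gamma_{2k-1}^{\oplus2}$ respectively: both members of each pair are invertible with cosquare similar to $J_{2k}(-1)^{\oplus2}$ (resp.\ $J_{2k-1}(1)^{\oplus2}$), so Lemma \ref{cosquares_lem} applies. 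You actually invoke this cosquare fact for $\widetilde H_4(-1)$, but only as a ``guiding structural fact'' for a base case; applied for general $k$ it dissolves the two-chain gluing problem entirely, and the Canonical reduction and Transitivity laws then finish (ii) and (iii).
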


\begin{proof} In order to see that all transformations in (i)--(iii) are $(\tau,\upsilon)-$invariant, first note that
$$
\begin{array}{lclclc}
    \tau\big(\widetilde{H}_{2k}(\mu) \oplus H_2(-1)\big)&=&k+2&=&\tau\big(\widetilde{H}_{2k-2}(\mu) \oplus  \Gamma_1^{\oplus 2}\big),&\mbox{for $\mu\neq\pm1$ and $k\geq1$}\\ 
    \upsilon\big(\widetilde{H}_{2k}(\mu) \oplus H_2(-1)\big)&=&2k&=&\upsilon\big(\widetilde{H}_{2k-2}(\mu) \oplus  \Gamma_1^{\oplus 2}\big),&\mbox{for $\mu\neq\pm1$ and $k\geq1$}\\
        \tau\big(\widetilde{H}_{4k+2}(-1) \oplus H_2(-1)\big) &= &2k+2&=&\tau\big(\widetilde\Gamma_{2k}^{\oplus2}\oplus \Gamma_1^{\oplus 2}\big),&\mbox{for $k\geq 1$},\\
        \upsilon\big(\widetilde{H}_{4k+2}(-1) \oplus H_2(-1)\big) &=&4k&=&\upsilon\big(\widetilde\Gamma_{2k}^{\oplus2}\oplus \Gamma_1^{\oplus 2}\big),&\mbox{for $k\geq 1$},\\
        \tau\big(\widetilde{H}_{4k}(1) \oplus H_2(-1)\big)&=&2k+2&=&\tau\big(\widetilde\Gamma_{2k-1}^{\oplus2} \oplus \Gamma_1^{\oplus 2}\big),&\mbox{for $k\geq1$},\\
        \upsilon\big(\widetilde{H}_{4k}(1) \oplus H_2(-1)\big)&=&4k&=&\upsilon\big(\widetilde\Gamma_{2k-1}^{\oplus2} \oplus \Gamma_1^{\oplus 2}\big),&\mbox{for $k\geq1$}.
\end{array}
$$
Now let us prove the consistence in (i)--(iii). The following identity is used:
$$
\widetilde H_{2k}(\mu)=\begin{bmatrix}
\widetilde H_{2k-2t}(\mu)&E_{(2k-2t)\times2t}\\0&\widetilde H_{2t}(\mu)
\end{bmatrix},\qquad\mbox{for $t<k$.}
$$

\begin{enumerate}[(i)] 

\item If $k=1$ then $\widetilde{H}_2(\mu) \oplus H_2(-1) \overset{X_1}{\rightsquigarrow} \Gamma_1^{\oplus 2}$, for $X_1=\begin{smat} 
 1 & \mathfrak i \\
 \frac{1}{1+\mu } & -\frac{\mathfrak i}{1+\mu } \\
 0 & 1-\mu  \\
 -\frac{\mathfrak i}{1+\mu } & 0 
\end{smat}$.

If $k=2$ then $\widetilde{H}_{4}(\mu) \oplus H_2(-1) \overset{X_2}{\rightsquigarrow} \widetilde{H}_{2}(\mu) \oplus \Gamma_1^{\oplus 2}$, for  
$X_2={\scriptsize\left[\begin{array}{c|c}
\begin{matrix} 1 & \;\;\;\;\;\;0\;\;\;\;\;\; \\ 0 & 1 \end{matrix} & 
\begin{matrix} 0 & 0 \\ 0 & 0 \end{matrix} \\ \hline
\begin{matrix}
0 & 0 \\ 0 & -\frac{1}{1+\mu } \\ 
0 & -\mathfrak i \\ 0 & -\frac{\mathfrak i}{-1+\mu^2}
\end{matrix} & \text{\normalsize $X_1$}
\end{array}\right]}.$



If $k>2$ then $\widetilde{H}_{2k}(\mu) \oplus H_2(-1) \overset{X_k}{\rightsquigarrow} \widetilde{H}_{2k-2}(\mu) \oplus \Gamma_1^{\oplus 2}$, for $X_k=I_{2k-4}\oplus X_2$, since 

\begin{align*} \label{HtheMainPattern}
(I_{2k-4}\oplus X_2)^\top \Big(\widetilde{H}_{2k}(\mu) \oplus H_2(-1)\Big) (I_{2k-4}\oplus X_2) &=\begin{bmatrix}
I_{2k-4} & 0 \\
0 & X_2^\top
\end{bmatrix}
\begin{bmatrix}
\widetilde{H}_{2k-4}(\mu) & E_{(2k-4)\times 6} \\
0 & \widetilde{H}_4(\mu) \oplus H_2(-1)
\end{bmatrix}
\begin{bmatrix}
I_{2k-4} & 0 \\
0 & X_2
\end{bmatrix}\\
&= 
\begin{bmatrix}
\widetilde{H}_{2k-4}(\mu) & E_{(2k-4)\times 6} X_2 \\
0 & X_2^\top \left(\widetilde{H}_4(\mu) \oplus H_2(-1)\right) X_2
\end{bmatrix} \\
&= \begin{bmatrix}
\widetilde{H}_{2k-4}(\mu) & E_{(2k-4)\times 4} \\
0 & \widetilde{H}_2(\mu)\oplus \Gamma_1^{\oplus 2}
\end{bmatrix}\\
\nonumber &= \widetilde{H}_{2k-2}(\mu)\oplus \Gamma_1^{\oplus 2}.
\end{align*}


\item 
Let us prove, for $k\geq1$, that 
$$\widetilde{H}_{4k+2}(-1) \oplus H_2(-1)\overset{X_k}{\rightsquigarrow} \widetilde{H}_{4k}(-1) \oplus \Gamma_1^{\oplus 2}, \text{ for } X_k=I_{4k-2}\oplus C \text{, with } C=\begin{smat} 
 1 & 0 & 0 & 0 \\
 0 & 0 & 1 & -\mathfrak i \\
 0 & 0 & 1 & \mathfrak i \\
 0 & 0 & 0 & -\mathfrak i \\
 0 & -1 & 1 & -\mathfrak i \\
 1 & 0 & 0 & 0 \\
\end{smat}.$$ 
This is because 
\begin{align*} 
(I_{4k-2}\oplus C)^\top \Big(\widetilde{H}_{4k+2}(-1) \oplus H_2(-1)\Big) (I_{4k-2}\oplus C)  & =
\begin{bmatrix}
I_{4k-2} & 0 \\
0 & C^\top
\end{bmatrix}
\begin{bmatrix}
\widetilde{H}_{4k-2}(-1) & E_{(4k-2)\times 6} \\
0 & \widetilde{H}_4(-1) \oplus H_2(-1) 
\end{bmatrix}
\begin{bmatrix}
I_{4k-2} & 0 \\
0 & C
\end{bmatrix}
\\ & = 
\begin{bmatrix}
\widetilde{H}_{4k-2}(-1) & E_{(4k-2)\times 6} C \\
0 & C^\top \left(\widetilde{H}_4(-1) \oplus H_2(-1)\right) C
\end{bmatrix} \\
&= \begin{bmatrix}
\widetilde{H}_{4k-2}(-1) & E_{(4k-2)\times 4} \\
0 & \widetilde{H}_2(-1)\oplus \Gamma_1^{\oplus 2} 
\end{bmatrix}\\
\nonumber &= \widetilde{H}_{4k}(-1)\oplus \Gamma_1^{\oplus 2}.
\end{align*}

Finally, let us see that $\widetilde H_{4k}(-1)$ is congruent to $\widetilde\Gamma_{2k}^{\oplus2}$ or, equivalently, that $H_{4k}(-1)$ is congruent to $\Gamma_{2k}^{\oplus2}$.
In order to do this, we are going to prove that the cosquares of $H_{4k}(-1)$ and $\Gamma_{2k}^{\oplus2}$ 
 are similar, and this immediately implies that $H_{4k}(-1)$ and $\Gamma_{2k}^{\oplus2}$ are congruent, by Lemma \ref{cosquares_lem}.

 The cosquare of $H_{4k}(-1)$ is 
$$
\begin{array}{ccl}
H_{4k}(-1)^{-\top} H_{4k}(-1)&=&\begin{bmatrix}
0&I_{2k}\\J_{2k}(-1)&0
\end{bmatrix}^{-\top}\begin{bmatrix}
0&I_{2k}\\J_{2k}(-1)&0
\end{bmatrix}\\&=&\begin{bmatrix}
0&J_{2k}(-1)^{-1}\\I_{2k}&0
\end{bmatrix}^\top\begin{bmatrix}
0&I_{2k}\\J_{2k}(-1)&0
\end{bmatrix}\\&=&\begin{bmatrix}
0&I_{2k}\\J_{2k}(-1)^{-\top}&0
\end{bmatrix}\begin{bmatrix}
0&I_{2k}\\J_{2k}(-1)&0
\end{bmatrix}\\&=&\begin{bmatrix}
J_{2k}(-1)&0\\0&J_{2k}(-1)^{-\top}
\end{bmatrix},
\end{array}
$$
and the cosquare of $\Gamma_{2k}^{\oplus2}$ is
$$
\begin{bmatrix}
\Gamma_{2k}^{-\top}&0\\0&\Gamma_{2k}^{-\top}
\end{bmatrix}\begin{bmatrix}
\Gamma_{2k}&0\\0&\Gamma_{2k}
\end{bmatrix}=\begin{bmatrix}
\Gamma_{2k}^{-\top}\Gamma_{2k}&0\\0&\Gamma_{2k}^{-\top}\Gamma_{2k}
\end{bmatrix},
$$
with (see \cite[p. 13]{semaj})
$$
\Gamma_{2k}^{-\top}\Gamma_{2k}=\begin{bmatrix}
-1&-2&&\star\\&\ddots&\ddots&\\&&-1&-2\\0&&&-1
\end{bmatrix},
$$
 where $\star$ denotes some entries that are not relevant in our arguments. As $J_{2k}(-1)^{-\top}$  is similar to $J_{2k}(-1)$, the previous identities show that  $\big(H_{4k}(-1)\big)^{-\top}H_{4k}(-1)$ and $\left(\Gamma_{2k}^{\oplus 2}\right)^{-\top}\Gamma_{2k}^{\oplus2}$ are similar, since the Jordan canonical form of both them is $J_{2k}(-1)^{\oplus 2}$.
 

\item Let us prove that, for $k\geq1$:
\begin{eqnarray*} 
\widetilde{H}_{4k}(1) \oplus H_2(-1) \overset{X_k}{\rightsquigarrow}  \widetilde{H}_{4k-2}(1) \oplus \Gamma_1^{\oplus 2},\quad\text{for $X_k=I_{4k-4}\oplus C$,\  with $C=\begin{smat} 
 1 & 0 & 0 & 0 \\
 0 & 1 & 0 & 0 \\
 0 & 0 & 1 & \mathfrak i \\
 0 & -\frac{1}{2} & \frac{1}{2} & -\frac{\mathfrak i}{2} \\
 0 & 0 & 1 & \mathfrak i \\
 0 & \frac{1}{2} & 0 & 0 
\end{smat}$.}
\end{eqnarray*}
For $k=1$ the solution matrix is $X_1=C$, as it can directly checked. Let us now see it for $k\geq2$:
\begin{align*}
(I_{4k-4}\oplus C)^\top \left(\widetilde{H}_{4k}(1) \oplus H_2(-1)\right) (I_{4k-4}\oplus C)
& = \begin{bmatrix}I_{4k-4}&0\\0&C^\top\end{bmatrix}\begin{bmatrix}
\widetilde H_{4k-4}(1)& E_{(4k-4)\times 6} \\0& \widetilde H_{4} (1) \oplus H_2(-1)
\end{bmatrix}\begin{bmatrix}I_{4k-4}&0\\0&C\end{bmatrix}\\
&=
\begin{bmatrix}
\widetilde{H}_{4k-4}(1) & E_{(4k-4)\times 6} C \\
0 & C^\top \left(\widetilde{H}_4(1) \oplus H_2(-1)\right) C
\end{bmatrix}
\\ &= \begin{bmatrix}
\widetilde{H}_{4k-4}(1) & E_{(4k-4)\times 4} \\
0 & \widetilde{H}_2(1)\oplus \Gamma_1^{\oplus 2} 
\end{bmatrix}\\
&= \widetilde{H}_{4k-2}(1)\oplus \Gamma_1^{\oplus 2}.
\end{align*}
It remains to see that $\widetilde H_{4k-2}(1)$ is congruent to $\widetilde\Gamma_{2k-1}^{\oplus2}$ or, equivalently, that $H_{4k-2}(1)$ is congruent to $\Gamma_{2k-1}^{\oplus2}$.
To prove this, we can proceed as before, by showing that the cosquares of $ H_{4k-2}(1)$ and $\Gamma_{2k-1}^{\oplus2}$ are similar (in this case, their Jordan canonical form is $J_{2k-1}(1)^{\oplus2}$), and this implies that $ H_{4k-2}(1)$ and $\Gamma_{2k-1}^{\oplus2}$ are congruent, again by Lemma \ref{cosquares_lem}. 
\end{enumerate}
\end{proof}

\section{The main result}\label{main_sec}

The following result, which is the main result in this work, improves the main result in \cite{bcd} (namely, Theorem 8 in that reference) by including the case where the CFC of $A$ contains blocks of type $H_2(-1)$, that were excluded in \cite[Th. 8]{bcd}.

\begin{theorem}\label{main_th}
Let $A$ be a complex square matrix whose {\rm CFC} does not have blocks of type $\widetilde H_4(1)$, and $B$ a symmetric matrix. Then $X^\top AX= B$ is consistent if and only if $ \rank B\leq\min\{\tau(A),\upsilon(A)\}$. 
\end{theorem}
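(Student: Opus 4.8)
The plan is to dispose of the general right-hand side first and then concentrate all the effort on sufficiency. Since $B$ is symmetric, its CFC is $I_{\rank B}\oplus 0_s$; hence by the Canonical reduction law and the $J_1(0)$-law of Lemma \ref{basic_laws_lemma}, the equation $X^\top AX=B$ is consistent if and only if $A\rightsquigarrow I_m$ with $m:=\rank B$. The necessity of $m\le\min\{\tau(A),\upsilon(A)\}$ is then precisely Theorem \ref{necessary_th}, so the entire task is to show that, when $\mathrm{CFC}(A)$ contains no $\widetilde H_4(1)$ block, the inequality $m\le\min\{\tau(A),\upsilon(A)\}$ forces $A\rightsquigarrow I_m$. (The case $m=0$ is trivial, so I assume $m\ge1$.)

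For the sufficiency I would write $\mathrm{CFC}(A)=A_0\oplus H_2(-1)^{\oplus p}$, where $A_0$ collects every canonical block other than $H_2(-1)$, and then repeatedly \emph{absorb} the skew blocks using the $(\tau,\upsilon)$-invariant transformations of Section \ref{absorb_sec}. Concretely: as long as both an $H_2(-1)$ summand and a current non-$H_2(-1)$ block admissible as a partner in one of Lemmas \ref{joining-Js_lem}, \ref{joining-Gammas_lem}, \ref{joining-Hs_lem} are present, I apply that lemma to the pair (via the Addition and Transitivity laws). Every such step removes one $H_2(-1)$ summand, so the process terminates; and since each step is $(\tau,\upsilon)$-invariant, by \eqref{additive} the values $\tau$ and $\upsilon$ of the whole matrix are unchanged throughout. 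The result is a transformation $A\rightsquigarrow A_0^\ast\oplus H_2(-1)^{\oplus r}$ in which $A_0^\ast$ has no $H_2(-1)$ block and admits no further absorption.

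The point that makes the count painless is the reading of Table \ref{taupsilon_table}: a single canonical block has $\tau=\upsilon$ exactly for $J_1(0),J_3(0),\Gamma_1,\widetilde\Gamma_2$, every other non-$H_2(-1)$ block has $\tau<\upsilon$, and each of those latter blocks is precisely a left-hand side of one of the Section \ref{absorb_sec} lemmas, hence an admissible partner. Consequently, if $r>0$ at termination then no block of $A_0^\ast$ can be a partner, so every block of $A_0^\ast$ has $\tau=\upsilon$ and therefore $\tau(A_0^\ast)=\upsilon(A_0^\ast)$. Using $(\tau,\upsilon)$-invariance together with $\tau(H_2(-1))=1$ and $\upsilon(H_2(-1))=0$, we get $\upsilon(A_0^\ast)=\upsilon(A)$ and $\tau(A_0^\ast)+r=\tau(A)$; a one-line case split then yields $\tau(A_0^\ast)=\min\{\tau(A),\upsilon(A)\}$ in both regimes — if $r>0$ because $\tau(A_0^\ast)=\upsilon(A_0^\ast)=\upsilon(A)\le\tau(A)$, and if $r=0$ because Lemma \ref{rho<=sigma_lem} gives $\tau(A_0^\ast)=\tau(A)\le\upsilon(A)$. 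Finally the Elimination law yields $A_0^\ast\oplus H_2(-1)^{\oplus r}\rightsquigarrow A_0^\ast$, and since $A_0^\ast$ has neither $H_2(-1)$ nor $\widetilde H_4(1)$ blocks, \cite[Th.~8]{bcd} gives $A_0^\ast\rightsquigarrow I_m$ for every $m\le\tau(A_0^\ast)=\min\{\tau(A),\upsilon(A)\}$; the Transitivity law then chains $A\rightsquigarrow A_0^\ast\oplus H_2(-1)^{\oplus r}\rightsquigarrow A_0^\ast\rightsquigarrow I_m$, as desired.

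The main obstacle I foresee lies in securing the two structural facts behind the previous paragraph. First, one must verify that no transformation in Lemmas \ref{joining-Js_lem}, \ref{joining-Gammas_lem}, \ref{joining-Hs_lem} ever produces a new $H_2(-1)$ or a $\widetilde H_4(1)$ block: inspecting their right-hand sides, the only outputs are $\Gamma_1$, blocks $\widetilde\Gamma_j$, smaller Type-$0$ blocks, and smaller Type-II blocks $\widetilde H_{2\ell}(\mu)$ with the same $\mu\ne\pm1$, so the class ``no $H_2(-1)$, no $\widetilde H_4(1)$'' to which \cite[Th.~8]{bcd} applies is preserved; this is exactly where the hypothesis that $\mathrm{CFC}(A)$ excludes $\widetilde H_4(1)$ is indispensable, since part (iii) of Lemma \ref{joining-Hs_lem} needs an $H_2(-1)$ partner to dissolve a $\widetilde H_{4k}(1)$ block, and a lone $\widetilde H_4(1)$ cannot be eliminated at all. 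Second, the equivalence ``non-$H_2(-1)$ block with $\tau<\upsilon$ $\iff$ admissible partner'' must be checked block by block against Table \ref{taupsilon_table}; this is routine, but it is the hinge that converts the a priori delicate question of how many $H_2(-1)$'s each block can swallow into the clean invariance argument above, so I would state and verify it carefully as a preliminary remark.
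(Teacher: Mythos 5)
Your proposal is correct and follows essentially the same route as the paper: reduce to $A\rightsquigarrow I_m$, get necessity from Theorem \ref{necessary_th}, and then absorb the $H_2(-1)$ blocks via the $(\tau,\upsilon)$-invariant transformations of Lemmas \ref{joining-Js_lem}, \ref{joining-Gammas_lem}, and \ref{joining-Hs_lem} until either none remain or only non-partner blocks ($J_1(0)$, $J_3(0)$, $\Gamma_1$, $\widetilde\Gamma_2$) survive alongside them, exactly the paper's cases (C0) and (C1). The only (harmless) divergence is in the endgame: where the paper handles the residual case by an explicit computation using Lemma \ref{j3_lem} and $\widetilde\Gamma_2\rightsquigarrow\Gamma_1$, you discard the leftover $H_2(-1)^{\oplus r}$ by the Elimination law and invoke \cite[Th.~8]{bcd} uniformly on the remainder, with the same count $\tau(A_0^\ast)=\min\{\tau(A),\upsilon(A)\}$.
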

\begin{proof} The necessity of the condition is already stated in Theorem \ref{necessary_th}. We are going to prove that it is also sufficient.

By the $J_1(0)$-law and the Canonical reduction law, we may assume that both $A$ and $B$ are given in CFC and that neither $A$ nor $B$ have blocks of type $J_1(0)$. This implies, in particular, that $B=I_m$, for some $m$, and that $A$ is as in Definition \ref{components_CFC_def}, with $j_1=0$. We also assume that all blocks $\Gamma_k$ and $H_{2k}(\mu)$ in $A$, if present, have been replaced by $\widetilde\Gamma_k$ and $\widetilde H_{2k}(\mu)$, respectively.

Let us recall that $\Gamma_1^{\oplus m}=I_m$. Throughout the proof, we mainly use the first notation, to emphasize that we are dealing with canonical blocks. 

If the CFC of $A$ does not contain blocks $H_2(-1)$, then the result is provided in \cite[Th. 8]{bcd}. Otherwise, we are going to see that it is possible, by means of $(\tau,\upsilon)-$invariant transformations, to either ``absorb" all blocks $H_2(-1)$ or to end up with a direct sum of blocks $H_2(-1)$, together with, possibly, other blocks, which are quite specific. More precisely, we can end up with a direct sum of blocks satisfying one of the following conditions:
\begin{enumerate}[\rm (C1)]\setcounter{enumi}{-1}
\item\label{case0} There are no blocks $H_2(-1)$.
    \item\label{case2} There are some blocks $H_2(-1)$ together with, possibly, a direct sum of blocks $J_3(0)$, {$\widetilde\Gamma_2$,} and/or $\Gamma_1$.
\end{enumerate}
We are first going to see that, indeed, we can arrive to one of the situations described in cases (C\ref{case0})--(C\ref{case2}). { In the procedure, we may need to permute the canonical blocks, in order to use Lemmas \ref{joining-Js_lem}, \ref{joining-Gammas_lem}, and \ref{joining-Hs_lem}. By Theorem \ref{cfc_th}, this provides a congruent matrix which has, in particular, the same $\tau$ and $\upsilon$, so these permutations do not affect the consistency.} Then, we will prove that in both cases (C\ref{case0}) and (C\ref{case2}) the statement holds.

So let us assume that the CFC of $A$ contains a direct sum of blocks $H_2(-1)$, together with some other Type-0, Type-I, and Type-II blocks (except $\widetilde H_4(1)$). 

Using Lemma \ref{joining-Js_lem}, for each block $J_k(0)$ (with $k\neq3$) we can ``absorb" a block $H_2(-1)$ by means of a $(\tau,\upsilon)-$invariant transformation, and we end up with a direct sum of a block $J_{k-2}(0)$ together with two blocks $\Gamma_1$. We can keep reducing the size of the Type-0 blocks until either all $H_2(-1)$ blocks have been absorbed (so we end up in case (C\ref{case0})) or there are no more Type-0 blocks, except maybe blocks $J_3(0)$. Now, we can proceed in the same way with Type-I blocks using Lemma \ref{joining-Gammas_lem}. Again, we end up either with a direct sum containing no $H_2(-1)$ blocks (case (C\ref{case0}) again) or no Type-I blocks, except maybe blocks $\Gamma_1$ and/or $\widetilde\Gamma_2$. Next, we do the same with Type-II blocks using Lemma \ref{joining-Hs_lem}. Note that the reductions in parts (ii) and (iii) in the statement of Lemma \ref{joining-Hs_lem} produce as an output some Type-I blocks $\widetilde\Gamma_{k}$, with $k\geq1$. In the case when $k>1$, we can use again Lemma \ref{joining-Gammas_lem}, provided that there are still blocks $H_2(-1)$. 
Therefore, after these reductions, either we have absorbed all blocks $H_2(-1)$ (case (C\ref{case0}) again), or there are blocks $H_2(-1)$, together with, possibly, a direct sum of other blocks that cannot absorb them, namely $J_3(0)$, $\widetilde\Gamma_2$, and/or $\Gamma_1$ (case (C\ref{case2})).

Now, it remains to prove that in both cases (C\ref{case0}) and (C\ref{case2}) the statement holds, namely that $A\rightsquigarrow \Gamma_1^{\oplus m}$, for any $m\leq\min\{\tau( A),\upsilon(A)\}$, in these two cases. Let $\widehat A$ be the matrix obtained after applying to $A$ all the transformations explained in the previous paragraph. By the Transitive law, $A\rightsquigarrow\widehat A$. Moreover, since all these transformations are $(\tau,\upsilon)-$invariant, then \eqref{additive} implies that $\tau(A)=\tau(\widehat A)$ and $\upsilon(A)=\upsilon(\widehat A)$. Therefore, it is enough to prove that $\widehat A\rightsquigarrow \Gamma_1^{\oplus m}$ for any $m\leq\min\{\tau(\widehat A),\upsilon(\widehat A)\}$.  By the Elimination law, $\Gamma_1^{\oplus a} \rightsquigarrow \Gamma_1^{\oplus b}$ for any $b<a$, so it will be enough to prove that  $\widehat A\rightsquigarrow \Gamma_1^{\min\{\tau(\widehat A),\upsilon(\widehat A)\}}$.

In case (C\ref{case0}) the statement is true, as a consequence of \cite[Th. 8]{bcd}. More precisely, in this case, $\min\{\tau(A),\upsilon(A)\}=\tau(A)$, as a consequence of Lemma \ref{rho<=sigma_lem}. Then, \cite[Th. 8]{bcd} guarantees that $A\rightsquigarrow \Gamma_1^{\oplus\tau(A)}$ (in \cite[Th. 8]{bcd}, however, the notation $\tau$ was not used).

In case (C\ref{case2}), we may assume that $$\widehat A=H_2(-1)^{\oplus j}\oplus J_3(0)^{\oplus h}\oplus  \widetilde\Gamma_2^{\oplus k}\oplus\Gamma_1^{\oplus\ell} \quad \text{for some $j,h,k,\ell\geq0$}.$$ 
Note that, in this case, $\min\{\tau(\widehat A),\upsilon(\widehat A)\}=\upsilon(\widehat A)$, since 
$\tau(\widehat A)=j+2h+k+\ell > \upsilon(\widehat A)=2h+k+\ell$.
Hence, it is enough to prove that  $A\rightsquigarrow \Gamma_1^{\upsilon(\widehat A)}$. In order to do this, we consider the transformations
$$
H_2(-1)^{\oplus j}\oplus J_3(0)^{\oplus h}\oplus  \widetilde\Gamma_2^{\oplus k}\oplus\Gamma_1^{\oplus\ell} 
\rightsquigarrow
J_3(0)^{\oplus h}\oplus  \widetilde\Gamma_2^{\oplus k}\oplus\Gamma_1^{\oplus\ell} 
\rightsquigarrow
\Gamma_1^{\oplus 2h}\oplus \Gamma_1^{\oplus k}\oplus\Gamma_1^{\oplus\ell}=\Gamma_1^{\oplus \upsilon (\widehat A)},
$$
where the first transformation is a consequence of the Elimination law, and the second transformation is a consequence of the Addition law, together with Lemma \ref{j3_lem} (for the first addend) and with $\widetilde\Gamma_2 \overset{\begin{smat} 1 \\ 0\end{smat}}{\rightsquigarrow} \Gamma_1$ (for the second addend).
\end{proof}

\begin{remark}
Unfortunately, when the CFC of $A$ contains at least one block $\widetilde H_4(1)$, it is no longer true that, for any $m\leq\min\{\tau(A),\upsilon(A)\}$, the equation $X^\top AX=I_m$ is consistent. For instance, $X^\top \widetilde H_4(1)X=I_3$ is not consistent (see \cite[Th. 7]{bcd}), but $\tau(\widetilde H_4(1))=4$ and $\upsilon(\widetilde H_4(1))=3$, so $\min\{\tau(\widetilde H_4(1)),\upsilon(\widetilde H_4(1))\}=3$. Therefore, the case where the CFC of $A$ contains blocks $\widetilde H_4(1)$ deserves a further analysis.

 Related to this, Theorem \ref{main_th} can be slightly improved, allowing the CFC of $A$ to contain blocks $\widetilde H_4(1)$ provided that the number of these blocks is not larger than the number of blocks $H_2(-1)$. In this case, we can start the reduction procedure described in the proof of Theorem \ref{main_th} by ``absorbing" the blocks $\widetilde H_4(1)$ with the blocks $H_2(-1)$ as described in Lemma \ref{joining-Hs_lem}-(iii). More precisely, we can gather each block $\widetilde H_4(1)$ with a block $H_2(-1)$, and use the $(\tau,\upsilon)-$invariant transformation $\widetilde H_4(1)\oplus H_2(-1)\rightsquigarrow \Gamma_1^{\oplus4}$. Once we have absorbed all blocks $\widetilde H_4(1)$ we can continue with the reduction as explained in the proof of Theorem \ref{main_th}.
\end{remark}

 \section{Conclusions and open questions}\label{conclusion_sec}
In this paper, we have obtained a necessary condition for the equation $X^\top AX=B$ to be consistent, with $A,B$ being complex square matrices and $B$ being symmetric. This condition improves the one obtained in \cite[Th. 2]{bcd}. Moreover, we have proved that the condition is sufficient when the CFC of $A$ does not contain blocks $\widetilde H_4(1)$. This result also improves the one in \cite[Th. 8]{bcd}, where the case in which the CFC has blocks $H_2(-1)$ was excluded.

As a natural continuation of this work it remains to address the case where the CFC of $A$ contains blocks $\widetilde H_4(1)$, in order to fully characterize the consistency of $X^\top AX=B$, with $B$ symmetric, for any matrix $A$. We have seen that the condition mentioned above is no longer sufficient in this case, so a different characterization is needed. So far, we have been unable to find such a characterization.

\bigskip

\noindent{\bf Acknowledgments:} This research has been funded by the {\em Agencia Estatal de Investigaci\'on} of Spain through grants PID2019-106362GB-I00/AEI/10.13039/501100011033 and MTM2017-90682-REDT.

\end{document}